\def\XX{\mathbb{X}}
\def\YY{\mathbb{Y}}
\def\Diag{\mbox {\rm Diag}\,}
\def\Bar{\overline}
\def\ra{\rangle}
\def\la{\langle}
\def\ve{\varepsilon}
\def\B{\mathbb{B}}
\def\h{\hfill\Box}
\def\R{\mathbb{R}}
\def\ox{\bar{x}}
\def\oz{\bar{z}}
\def\ov{\bar{v}}
\def\conv{\mbox{\rm conv}\,}
\def\cone{\mbox{\rm cone}\,}
\def\ri{\mbox{\rm ri}\,}
\def\Im{\mbox{\rm Im}\,}
\def\para{\mbox{\rm par}\,}
\def\gph{\mbox{\rm gph}\,}
\def\epi{\mbox{\rm epi}\,}
\def\Span{\mbox{\rm span}\,}
\def\aff{\mbox{\rm aff}\,}
\def\dom{\mbox{\rm dom}\,}
\def\Ker{\mbox{\rm Ker}\,}
\def\sign{\mbox{\rm sign}\,}
\def\cl*co{\mbox{\rm cl}^*\mbox{\rm co}\,}
\def\cl{\mbox{\rm cl}\,}
\def\h{\hfill\triangle}
\def\oR{\Bar{\R}}
\def\lm{\lambda}
\def\al{\alpha}
\def\N{\mathbb{N}}
\def\hs7{\hspace*{7pt}}
\def\Id{\mathbb{I}}
\DeclareMathOperator{\prox}{\mathbf{prox}}
\renewcommand{\theequation}{\thesection.\arabic{equation}}
\def\h{\hfill\Box}
\def\kk{\kappa}
\DeclareMathOperator*{\argmin}{arg\,min}
\begin{document}

\newtheorem{Theorem}{Theorem}[section]
\newtheorem{Conjecture}[Theorem]{Conjecture}
\newtheorem{Proposition}[Theorem]{Proposition}
\newtheorem{Remark}[Theorem]{Remark}
\newtheorem{Lemma}[Theorem]{Lemma}
\newtheorem{Corollary}[Theorem]{Corollary}
\newtheorem{Definition}[Theorem]{Definition}
\newtheorem{Example}[Theorem]{Example}
\newtheorem{Fact}[Theorem]{Fact}
\newtheorem*{pf}{Proof}
\renewcommand{\theequation}{\thesection.\arabic{equation}}
\normalsize
\normalfont
\medskip
\def\endproof{$\h$\vspace*{0.1in}}

\title{\bf Nonsmooth Newton methods with effective subspaces for polyhedral regularization}
\date{}
\author{TRAN T. A. NGHIA\footnote{Department of Mathematics and Statistics, Oakland University, Rochester, MI 48309, USA; email: nttran@oakland.edu}%\and %DUY N. PHAN\footnote{University of Dayton Research Institute, University of Dayton, Dayton, OH 45469, USA; emails: duynhat.phan@udri.udayton.edu}
\and NGHIA V. VO\footnote{Department of Mathematics and Statistics, Oakland University, Rochester, MI 48309, USA; email: nghiavo@oakland.edu}\and  KHOA V. H. VU\footnote{Department of Mathematics, Wayne State University, Detroit, MI 48202, USA; email: khoavu@wayne.edu} }

\maketitle
\vspace{-0.2in}
\begin{abstract}
We propose several new nonsmooth Newton methods for solving convex composite optimization problems with polyhedral regularizers, while avoiding the computation of complicated second-order information on these functions. Under the tilt-stability condition at the optimal solution, these methods achieve the quadratic convergence rates expected of Newton schemes. Numerical experiments on Lasso, generalized Lasso, OSCAR-regularized least-square problems, and an image super-resolution task illustrate both the broad applicability and the accelerated convergence profile of the proposed algorithms, in comparison with first-order and several recently developed nonsmooth Newton schemes.
\end{abstract}

\noindent{\bf Mathematics Subject Classification (2020)}. 49J52, 49J53, 49M15, 65K10, 90C25
\vspace{0.1in}

\noindent{\bf Keywords:} Nonsmooth Newton methods, Polyhedral regularization, Generalized Hessian, Variational Analysis, Convex Optimization

\section{Introduction}
\setcounter{equation}{0}

In this paper, we mainly design nonsmooth Newton methods for solving the following composite optimization problem
\begin{equation}\label{p:CPintro}
\min_{x\in \XX}\qquad \varphi(x):=f(x)+g(x),    
\end{equation}
where $\XX$ is a Euclidean space, $f:\XX\to \R$ is a convex function that is continuously twice differentiable, and $g:\XX\to \oR:=\R\cup\{+\infty\}$ is a lower semicontinuous {\em polyhedral} (possibly nonsmooth) penalty function. Problems of the form \eqref{p:CPintro} cover a broad variety of applications in machine learning, statistics, signal processing, image analysis, and optimal control. In this setting, one typically combines a data-fidelity term with a regularization term that encodes prior structural information on the solution. 

To list a few important optimization problems of the format \eqref{p:CPintro}, we have the classical Least Absolute Shrinkage and Selection Operator \cite{Tibshirani96} (in brief, Lasso) problem well-studied in statistics, signal processing, and sparse optimization, in which the nonsmooth regularizer is the $\ell_1$ norm, $g(x) = \|x\|_1 = \sum_{i=1}^n |x_i|$, $x\in \R^n$, that is certainly a polyhedral convex function. The $\ell_1$ regularizer is also used in different problems, including the  $\ell_1$ regularized logistic regression widely employed for binary classification and high-dimensional feature selections when the fidelity term $f(x)$ is the {\em logistic loss}. Another significant class of polyhedral regularizer is the {\em 1D total variation} (TV) semi-norm defined by $g(x) = \sum_{i=1}^{n-1} |x_{i} - x_{i+1}|$, $x\in \R^n$, which promotes piecewise constant solutions and is significant in signal reconstruction. Beyond these classical examples, several structured polyhedral regularizers have been proposed to capture richer patterns. The fused Lasso \cite{Tibshirani05} augments the $\ell_1$ penalty with successive differences, $g(x) = \lambda_1 \|x\|_1 + \lambda_2 \sum_{i=1}^{n-1} |x_{i} - x_{i+1}|$ with parameters $\lm_1,\lm_2>0$, encouraging both sparsity and smoothness in ordered features such as time-series or genomic data. The clustered Lasso \cite{She10} penalizes pairwise differences between variables, $g(x) = \lambda_1 \|x\|_1 + \lambda_2 \sum_{i < j} |x_i - x_j|$, thereby promoting simultaneous sparsity and grouping of predictors with similar effects. The OSCAR penalty \cite{Bondell08}, $g(x) = \lambda_1 \|x\|_1 + \lambda_2 \sum_{i < j} \max\{|x_i|;\,|x_j|\}$, encourages both sparsity and grouping of predictors with equal magnitude. Finally, the $\ell_1 + \ell_\infty$ regularizer \cite{WuShenGeyer09}, $g(x) = \lambda_1 \|x\|_1 + \lambda_2 \|x\|_\infty$, induces structured sparsity by combining element-wise shrinkage with block-wise suppression. %All of these regularizers are polyhedral, and therefore fall directly into the scope of our proposed Newton-type method.

With its wide range of applications, there are numerous iterative strategies for solving problem \eqref{p:CPintro}, which can be classified into first-order and second-order methods. Proximal first-order methods such as the Proximal Gradient Method (a.k.a. Iterative Shrinkage-Thresholding Algorithm (ISTA)), the Fast Proximal Gradient Method (a.k.a.  Fast Iterative Shrinkage-Thresholding Algorithm (FISTA)), the Alternating Direction Method of Multipliers (ADMM), the Primal-Dual Algorithm, and their variants are the most widely used; see, e.g., \cite{BeckTeboulle09,Boyd11,ChambollePock11} for a comprehensive analysis of these algorithms. They rely on inexpensive proximal mappings to handle the nonsmooth term $g$ and enjoy global convergence guarantees with sublinear rates or linear rates under stronger assumptions on the model \eqref{p:CPintro}. Nevertheless, first-order methods may exhibit slow convergence in high-accuracy regimes. This motivates the development of Newton-type methods for solving \eqref{p:CPintro} that can exploit second-order information and usually yield a faster convergence rate.

Adapting the original Newton method to nonsmooth composite optimization is highly nontrivial, since the classical Hessian is undefined on the nonsmooth function $g$. Several major approaches have emerged. {\em Semismooth} Newton methods in \cite{Mifflin77,QiSun93,FacchineiPang03} exploit the semismoothness of proximal mappings and Clarke generalized derivatives, and have been widely applied to complementarity problems and variational inequalities. While effective, their local quadratic convergence analysis typically requires some special conditions that either fail in certain cases or hard to verify in practice. Moreover, finding one element of the Clarke generalized derivative can be a very difficult task in many complicated situations. A different line of work is the {\em coderivative-based} Newton methods recently developed in \cite{MordukhovichSarabi21,KhanhMordukhovichPhat24,KhanhMordukhovichPhat25}; a systematic exposition can be found in \cite[Chapter~9]{Mordukhovich24}. This framework also provides a natural extension of Newton steps by using the so-called {\em generalized Hessian} (a.k.a. the {\em second-order limiting 
subdifferential}) \cite{Boris92,PoliquinRock98} on the function $g$ and the key notion of {\em semismoothness}$^*$ introduced by Gfrerer and Outrata \cite{GfrererSIOPT21}. The complexity of superlinear convergence of these methods shows evidence of their efficiency in solving many optimization problems \cite{KhanhMordukhovichPhat24,KhanhMordukhovichPhat25}. However, computing the generalized Hessian explicitly  is a challenge in practice that makes finding Newton directions based on these frameworks to be limited in some special and simple cases of $g$. Another direction has been developed recently in  \cite{GfrererCOA25,Gfrerer25,GfrererSIOPT21,Gfrerer22} introducing the so-called  {\em Subspace Containing Derivative} (SCD) Newton method, which instead extracts second-order information and some special subspaces from the {\em graphical derivative} of the subdifferential mapping $\partial g$. \cite{Gfrerer22} showed that the SCD Newton method is a special case of the coderivative-based one, while the former is computationally simpler. Again, the superlinear convergence is obtained for this method under some reasonable local conditions at the optimal solutions.  Impressive experiments for different frameworks related to $\ell_1$ regularizers conducted in \cite{GfrererCOA25,Gfrerer25} clearly show the power of this method.  However, it seems that the computation of subspace containing derivatives on more complex regularizers $g$ is still complicated and abstract. Another geometric perspective for nonsmooth Newton method for solving \eqref{p:CPintro} for $\mathcal{C}^2$-partly smooth regularizers is exemplified in \cite{BIM23,LewisWylie2021}, where proximal-gradient iterations are accelerated by switching to Riemannian Newton steps once the active manifold is identified. This approach yields quadratic local convergence but requires a relative-interior non-degeneracy condition to make the active manifold to be identified at those points close to the optimal solution in question. Unfortunately, this condition may not hold in certain cases. %Finally, quasi-Newton methods such as BFGS and its limited-memory variant L-BFGS \cite{Nocedal80,Byrd95} approximate Hessians via successive updates, offering superlinear convergence in smooth settings with low memory requirements. Extensions to nonsmooth problems \cite{LewisOverton13} have shown promise, yet their guarantees remain weaker: quadratic convergence is not ensured, and theoretical results often cover restricted problem classes.

{\bf Our contribution.} In this paper, we develop  Newton-type methods for problem \eqref{p:CPintro} that are both theoretically rigorous and computationally implementable.  Our proposed algorithms do not need to compute any second-order information on the regularizer $g$ and exploit merely its first-order information. At each iteration, we construct an {\em effective subspace}, defined as the parallel space of the subdifferential of the conjugate penalty function, and restrict the Newton step to this subspace.  In particular, our main Newton method is designed as follows. 

\begin{algorithm}[H]
\caption{Newton method with effective subspaces}
Input: $x_0\in \mathbb{X}, \eta > 0$.
\begin{algorithmic}[1]\label{algo:poly0}
  %\scriptsize
  \STATE Find $(y_k,z_k)\in \gph \partial g$ satisfying 
  \begin{equation}\label{eq:bound-condI}
      \|z_k - \oz\| + \|y_k - \ox\|\le \eta \|x_k-\ox\|
  \end{equation}
  with $\oz:=-\nabla f(\ox)$.
  \STATE Define the effective subspace $L_k:={\rm par}\, \partial g^*(z_k)$ and find a Newton direction $d_k\in L_k$ solving the following quadratic optimization problem
    \begin{equation*}
  \min_{d\in L_k}\quad \frac{1}{2}\la \nabla^2 f(y_k)d,d\ra-\la z_k+\nabla f(y_k),d\ra.
  \end{equation*}
  \STATE Update $x_{k+1}=y_k-d_k$. 
\end{algorithmic}
\end{algorithm}

In this setup, $\ox$ is an optimal solution of \eqref{p:CPintro} and $\partial g$ stands for the convex subdifferential of $g$. The typical condition~\eqref{eq:bound-condI} originated in \cite{GfrererSIOPT21} gives us flexible choices of $(y_k,z_k)$ without determining the specific $\eta$. For example, they can be chosen from the ISTA iterations (or FISTA iterations with some modifications); see, e.g., our Section~\ref{sec:polyhedral} for further details. The main contribution in our paper is the Newton step of finding Newton direction $d_k$. The {\em effective subspace} $L_k$, which is the parallel space of the subdifferential of the conjugate function $g^*$ at $z_k$, is computed only based on the first-order information. From the first sight, this step looks similar with the corresponding ones in \cite{BIM23,LewisWylie2021} with active manifolds and \cite{GfrererCOA25,Gfrerer25} with subspaces containing derivative. However, even in the simple case of $\ell_1$ regularizer, our effective subspaces are different from the active subspaces in \cite{BIM23,LewisWylie2021} that actually depend on $y_k$ instead of $z_k$ as designed in our algorithm. We will show later  that our algorithm is a particular case of the SCD Newton method developed in \cite{GfrererCOA25,Gfrerer25} and also the coderivative-based Newton method \cite{KhanhMordukhovichPhat25} for two special classes of regularizers at which $g$ is either an indicator function or a support function of a polyhedral convex set. Nevertheless, we have an explicit formula of choosing the effective subspace $L_k$ that is fully computable in many complex polyhedral regularizers aforementioned; see also Section~\ref{sec:exam} and \ref{sec:numerical} for further details.  

Moreover, we prove that this scheme can achieve the quadratic convergence as desired for Newton methods without the non-degeneracy  assumption in \cite{BIM23,LewisWylie2021}, while other papers  \cite{GfrererCOA25,Gfrerer25, GfrererSIOPT21,KhanhMordukhovichPhat25,KhanhMordukhovichPhat24, MordukhovichSarabi21} in this direction only obtain superlinear convergence for their Newton methods.
Our analysis for quadratic convergence of our algorithm is grounded in the landmark notion of {\em tilt-stability} of minimizers introduced by Poliquin and Rockafellar \cite{PoliquinRock98}, a concept that characterizes Lipschitz continuity of solution mapping via a {\em tilt/linear perturbation} on the objective function. In convex settings,  this condition at the minimizer $\ox$ is equivalent to some other conditions on {\em (strong) metric regularity} of the subdifferential mapping also used in \cite{GfrererCOA25,KhanhMordukhovichPhat25} for convergence analysis.

In our paper, an important part is the exact computation of the effective subspace $L_k$ for several classes of polyhedral regularizers, including the support functions of convex sets containing many polyhedral (semi-)norms in $\R^n$ such as the $\ell_1$ norm, the $\ell_\infty$ norm, the sorted $\ell_1$ norm and the 1D total variation semi-norm. We also obtain a general formula for computing this effective subspace of composite convex functions and apply it to evaluate the case of 1D total variation semi-norm mentioned above. Finally, we validate the theoretical findings with numerical experiments on synthetic and image datasets, showing that our Newton variants dramatically accelerate convergence for ISTA/FISTA and can outperform some other second-order algorithms such as those developed recently in \cite{GfrererCOA25,KhanhMordukhovichPhat24,LST18} in solving several problems, including Lasso, $\ell_\infty$-regularized problem, OSCAR, and 1D total variation problem.

 %There are many other optimization problems involving  polyhedral penalty functions not studied in this paper such as the support vector machine, the 2D-total variation semi-norm, the .... problems. They are the subjects of our future research. 

%The remainder of the paper is organized as follows. Section \ref{sec:prelim} introduces the necessary preliminaries from convex and variational analysis, including subdifferentials, normal cones, coderivatives, second-order subdifferentials, and tilt stable minimizers. Section \ref{sec:polyhedral} presents the active-subspace Newton method for polyhedral regularizers and establishes its quadratic convergence. Comparisons with recent nonsmooth Newton methods are also provided. Section \ref{sec:exam} computes the active subspaces for several prominent regularizers. Section \ref{sec:numerical} provides numerical experiments on synthetic and benchmark datasets, highlighting the practical efficiency of our method compared with first-order and several Newtonian methods. Section \ref{sec:conclude} concludes with remarks on extensions to non-polyhedral regularizers, to nonconvex settings, and to equilibrium problems.

\section{Preliminaries}\label{sec:prelim}
\setcounter{equation}{0}

Throughout this paper, we denote by $\XX$ a finite-dimensional Euclidean space endowed with the inner product $\la\cdot,\cdot\ra$ and the corresponding Euclidean norm $\|\cdot\|$. For a subspace $S \subseteq \XX$, $P_{S}$ stands for the {\em orthogonal projection} onto $S$ and $S^{\bot}$ is the {\em orthogonal complement} of $S$. Given a nonempty set $C\subseteq \XX$, the {\em span}, {\em affine hull}, {\em convex conic hull}, {\em convex hull}, and {\em closure} of $C$ are denoted, respectively, by $\Span C$, $\aff C$, $\cone C$, $\conv C$, and $\cl C$.  For a linear operator $A:\XX\to \YY$, we write $\Ker A$ and $\Im A$, respectively, for the {\em kernel/null space} and {\em image/range} of $A$. The set $\mathbb{B}_{\varepsilon}(\bar{x})$ is  the {\em closed ball} in $\XX$ centered at $\bar{x}\in \XX$ with radius $\varepsilon>0$. %For any $x\in \XX$ with $\mathrm{dim}\XX=n$, we define the {\em support} of $x$ as $${\rm supp}(x):=\big\{i\in \{1,2,\ldots,n\}\mid x_i \neq 0\big\}.$$
\subsection{Tools from Convex Analysis}
Let $C$ be  a closed convex set in $\XX$. The {\em support function} of $C$ is given by 
\begin{equation}\label{eq:support}
\sigma_C(v) := \sup_{x\in C}\,\la v,x \ra \quad \forall v\in \XX.
\end{equation}
The topological {\em relative interior} of $C$ is the set 
\[
\ri C := \{x \in \XX \mid \exists\, \varepsilon >0: \B_{\varepsilon}(x)\cap\aff C \subset C \}.
\]
The {\em normal cone} to $C$ at $x \in C$ is defined as 
\begin{equation}\label{def:N}
N_{C}(x) := \{v \in \XX \mid \la v,y - x\ra \le 0, \forall y \in C  \}.
\end{equation}
Let us recall the \emph{tangent cone} to the convex set $C$ at $x\in C$ here
\begin{equation}\label{eq:tangent}
T_C (x):= \cl(\cone(C-x)),
\end{equation}
which is a closed convex cone. Indeed, the {\em polar} of the tangent cone $T_C(x)$ is the normal cone $N_C(x)$ defined above and vice versa due to the convexity of $C$.

One of the central definitions throughout the paper is the {\em parallel subspace} of $C\subseteq \XX$, which is given by 
\begin{equation}\label{def:Par}
 {\rm par}\, C:= \Span(C - x) \quad \text{ for all }\quad  x\in C.   
\end{equation}
Note that the parallel subspace of any convex cone $K\subset \XX$ can be calculated as 
\[ {\rm par}\, K = \Span K = K - K.
\]

For a proper, lower semi-continuous (l.s.c), and convex function $\varphi:\XX\to \oR=\R\cup\{+\infty\}$, the {\em effective domain} and the {\em epigraph} of $\varphi$ are defined, respectively, by
\[
\dom \varphi:= \{x \in \XX \mid \varphi(x) < \infty \} \quad \text{and} \quad \epi \varphi:= \{(x,t) \in \XX \times \R \mid \varphi(x) \le t \}.
\]
The \emph{convex subdifferential} of $\varphi$ at $x \in \dom \varphi$ is known as 
\begin{equation}\label{def:convex-subdiff}
\partial \varphi(x) := \{v \in \XX \mid \varphi(y) \ge \varphi(x) + \la v,y-x \ra, \forall y \in \XX\}. 
\end{equation}
The normal cone $N_C(x)$ in \eqref{def:N} is indeed the subdifferential \eqref{def:convex-subdiff} of the {\em indicator function} $\delta_{C}(\cdot)$ at $x$, which is defined by $\delta_{C}(x) = 0$ whenever $x \in C$, and $\infty$ otherwise.

The \emph{Legendre-Fenchel conjugate} of $\varphi$ is the convex function $\varphi^{*}: \XX \to \Bar{\R}$ defined by
\[
\varphi^*(v) := \sup_{x\in \XX}\, \{\la v, x\ra - \varphi(x)\}\ \text{ for all }\ v\in \XX.
\]
When $\varphi(x)=\delta_C(x)$ is the indicator function to a convex set $C$, its conjugate $\delta_C^*$ is exactly the support function $\sigma_C$ \eqref{eq:support} to $C$. 

A fundamental result in Convex Analysis is the Fenchel-Young's identity occasionally used in our paper, which states that 
\begin{equation}\label{def:fenyou}
    v \in \partial \varphi(x) \quad \text{if and only if} \quad \varphi(x) + \varphi^{*}(v) = \la v,x \ra .
\end{equation}

Let us recall next the {\em proximal operator} of a convex function $\varphi$
\begin{equation}\label{def:prox}
\prox_{\alpha \varphi}(x) := \argmin_{y\in \XX}\left\{\varphi(y) + \frac{1}{2\alpha}\|y-x\|^2 \right\} \quad \forall x\in \XX, \forall \al>0.
\end{equation}
The proximal mapping $\prox_{\alpha \varphi}:\XX\to \XX$ associated with a convex function $\varphi$ is a single-valued mapping with full domain. Moreover, it can be expressed equivalently as the {\em resolvent operator} $(\Id + \alpha\partial \varphi)^{-1}$, where $\Id:\XX\to \XX$ is the {\em identity operator}. Using the resolvent formula, we derive the following equivalence 
\begin{equation}\label{eq:res}
y = \prox_{\alpha \varphi}(x) \quad \text{if and only if} \quad \dfrac{x-y}{\alpha} \in \partial \varphi(y).
\end{equation}
\subsection{Tools from Variational Analysis}
In this section, we mainly follow the notations from the classical monographs \cite{Mordukhovich06,Mordukhovich24,Rockafellar98} on Variational Analysis. Given a closed (possibly nonconvex) set $\Omega\subset \XX$ and $\ox\in \Omega$. The {\em regular normal cone} (a.k.a. Fr\'echet normal cone) to $\Omega$ at $\ox$ is defined by 
\begin{equation*}
    \widehat{N}_\Omega (\ox):=\left\{v\in \XX\;\Bigg|\;\limsup_{x\xrightarrow{\Omega}\ox}\dfrac{\la v, x - \ox\ra}{\|x-\ox\|} \le 0\right\},
\end{equation*}
where $x\xrightarrow{\Omega}\ox$ means that $x\in \Omega$ and $x\to \ox$. The {\em limiting normal cone}  (a.k.a. Mordukhovich normal cone) to $\Omega$ at $\ox\in \Omega$ is given by 
\begin{equation}\label{def:limit-cone}
    N_{\Omega}(\ox) := \big\{v\in \XX \mid \exists\,  \{x_k\}\xrightarrow{\Omega}\ox, \{v_k\}\to v \text{ such that }v_k \in \widehat{N}_{\Omega}(x_k)\; \forall k\in \N\big\}.
\end{equation}
Both the regular and limiting normal cones return exactly to the convex normal cone introduced in \eqref{def:N} when the set $\Omega$ is convex, but they are different in general.

For a set-valued mapping $F: \mathbb{X} \rightrightarrows \mathbb{X}$ from $\XX$ to itself, we associate with it the {\em graph} given by
\[
\gph F := \{(x,y) \in \mathbb{X} \times \mathbb{X} \mid y\in F(x)\}.
\]
The {\em limiting coderivative} of a set-valued mapping $F:\XX \rightrightarrows \XX$ at $(\ox,\ov)\in \gph F$ is defined as
\begin{equation}\label{def:limit-code}
D^{*}F(\ox|\,\ov)(w) := \{z \in \XX \mid (z,-w) \in N_{\mathrm{gph}\,F}(\ox,\ov) \} \quad \text{for all} \quad w \in \XX.
\end{equation}
When $F$ is the convex subdifferential mapping $\partial \varphi$ and $\ov\in \partial \varphi(\ox)$, the limiting coderivative of $\partial \varphi$ at $\ox$ for $\ov$ is known as the {\em generalized Hessian} (a.k.a. {\em second-order limiting subdifferential}) \cite{Boris92,Mordukhovich06,PoliquinRock98} of $\varphi$ at $\ox$ for $\ov$ denoted by:
\begin{equation}\label{def:secondsub}
\partial^{2}\varphi(\ox|\,\ov)(w) := (D^{*}\partial \varphi)(\ox|\,\ov)(w) \quad \text{for all} \quad w \in \XX,
\end{equation}
which plays an important role in our paper and several nonsmooth Newton methods developed recently in \cite{KhanhMordukhovichPhat25,KhanhMordukhovichPhat24,MordukhovichSarabi21}. It is worth noting that if $\varphi$ is convex and twice continuously differentiable around $\ox$ with $\ov = \nabla \varphi(\ox)$, then its generalized Hessian reduces to the regular Hessian mapping
\[
\partial^{2}\varphi(\ox|\, \ov)(w) =  \nabla^2 \varphi(\ox)w \quad \text{for all} \quad w \in \XX.
\]
In this case, the classical Newton method for finding a minimizer $\ox$ of $\varphi$ is designed as: Initiate $x_0\in \XX$ and proceed
\begin{equation}\label{Al:Newton}
x_{k+1}=x_k-d_k\quad \mbox{with}\quad d_k:=[\nabla^2 \varphi(x_k)]^{-1}\nabla \varphi (x_k).
\end{equation}
Since $\ox$ is a (local) minimizer of $\varphi$, the Hessian $\nabla^2\varphi(\ox)$ is always positive semi-definite. The existence of Newton direction $d_k$ requires the Hessian $\nabla^2 \varphi(x_k)$ to be non-singular. To make this happen, it is conventional to assume that the Hessian $\nabla^2\varphi(\ox)$ is positive definite and start with $x_0$ being  sufficiently close to $\ox$. When $\nabla^2 \varphi(x)$ is Lipschitz continuous around $\ox$, it is well-known that  the above sequence $\{x_k\}$ converges {\em quadratically} to $\ox$ in the sense that there exists a constant $C>0$ (independent of $x_0$) such that 
\begin{equation}\label{eq:Qua}
\|x_{k+1}-\ox\|\le C\|x_k-\ox\|^2\quad \mbox{for any }\quad k\in \mathbb{N}. 
\end{equation}

If $\varphi$ is not twice differentiable, designing Newton methods is a huge challenge, as the regular Hessian does not exist anymore. Naturally, replacing the Hessian in \eqref{Al:Newton} by the generalized Hessian \eqref{def:secondsub} is a possibility; see, e.g., {\bf Algorithm~\ref{algo:NewCod}} and  \cite{MordukhovichSarabi21,Mordukhovich24,KhanhMordukhovichPhat24,KhanhMordukhovichPhat25}. Spontaneously, the generalized Hessian $\partial^2 \varphi(\ox|\,0)$ may need to be ``positive definite'' to match the classical theory. This actually leads us to the definition of {\em tilt stability} at a minimizer $\ox$ introduced by Poliquin and Rockafellar in \cite{PoliquinRock98}, which is shown to be equivalent to the positive definiteness of $\partial^2 \varphi(\ox|\,0)$; see \cite[Theorem~1.3]{PoliquinRock98} or  Theorem~\ref{theo:Tilt} below. It is worth mentioning that the general definition and theory of tilt stability \cite{PoliquinRock98,Mordukhovich24} is for nonconvex and nonsmooth functions; here, we slightly modify it for the convex case.  

\begin{Definition}[Tilt stability]\label{Den:Tilt} Given an l.s.c. convex function $\varphi:\mathbb{X} \to \Bar{\R}$, a point $\ox \in \dom \varphi$ is called a \emph{tilt stable} minimizer of $\varphi$ if there exists $\gamma > 0$ such that the mapping 
\begin{equation*}
M_{\gamma}(v) := \argmin\{\varphi(x) - \la v,x \ra \mid x \in \B_{\gamma}(\ox)\} \quad \text{for } \quad v \in \XX
\end{equation*}
is single-valued and Lipschitz continuous on some neighborhood of $0 \in \XX$ with $M_{\gamma}(0) = \{\ox\}.$
\end{Definition}

Tilt-stability also lies behind the theory of nonsmooth Newton methods developed recently in \cite{MordukhovichSarabi21, KhanhMordukhovichPhat24,GfrererCOA25}. The following theorem, a direct consequence of \cite[Theorem~3.5]{BorisNghia15} and \cite[Example~13.30]{Rockafellar98}, gives a complete characterization of tilt stability and will be useful for our later analysis.  

\begin{Theorem}{\rm (A characterization of tilt stability via generalized Hessian)}\label{theo:Tilt} Consider an l.s.c. convex function $\varphi:\mathbb{X}\to \overline{\R}$ with $\ox\in \dom \varphi$ and $0\in \partial \varphi (\ox)$. Then $\ox$ is a \emph{tilt stable minimizer} of $\varphi$ if and only if the generalized Hessian $\partial^2 \varphi$ is positive definite around $(\ox,0)$ in the sense that there exist $\kappa,r>0$ satisfying 
\begin{equation}\label{eq:tilt}
    \la z,w\ra \ge \kappa\|w\|^2 \quad \forall z\in \partial^2 \varphi (x|\,v)(w),\; \forall (x,v)\in \mathbb{B}_{r}(\ox,0)\cap \gph \partial \varphi.
\end{equation}
\end{Theorem}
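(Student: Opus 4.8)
The plan is to obtain Theorem~\ref{theo:Tilt} as a specialization to the convex case of the general second-order characterization of tilt stability for prox-regular functions, which is exactly the strategy signaled by the two references. The proof itself should be short: its entire content lies in checking that a convex $\varphi$ falls within the scope of the general theorem and then matching notation.

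First I would verify the structural hypotheses needed to invoke \cite[Theorem~3.5]{BorisNghia15}. By \cite[Example~13.30]{Rockafellar98}, every proper l.s.c. convex function is both \emph{prox-regular} and \emph{subdifferentially continuous} at each point of its domain; in particular $\varphi$ has these properties at $\ox$ relative to the subgradient $0\in\partial\varphi(\ox)$. This is precisely the standing assumption under which the general tilt-stability characterization is valid. I would also record that, since $\varphi$ is convex and $0\in\partial\varphi(\ox)$, the point $\ox$ is a global minimizer, so that the localized argmin map $M_\gamma$ of Definition~\ref{Den:Tilt} satisfies $M_\gamma(0)=\{\ox\}$ for every small $\gamma>0$; this reconciles the convex reformulation of tilt stability used here with the general definition employed in \cite{BorisNghia15}.

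With the hypotheses in place, I would apply \cite[Theorem~3.5]{BorisNghia15} with the reference subgradient $\ov=0$. That theorem asserts, for a prox-regular and subdifferentially continuous $\varphi$, the equivalence between $\ox$ being a tilt-stable minimizer and the existence of constants $\kappa,r>0$ for which the generalized Hessian $\partial^2\varphi$ is uniformly positive definite on the graph-ball $\B_r(\ox,0)\cap\gph\partial\varphi$, i.e.\ condition~\eqref{eq:tilt}. Transcribing the two directions of this equivalence directly yields the claim.

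The main point requiring care --- and the only nontrivial step --- is the matching of the \emph{uniform} neighborhood condition \eqref{eq:tilt} against the merely pointwise positive definiteness of $\partial^2\varphi(\ox|\,0)$ that appears in the original Poliquin--Rockafellar criterion. The substance of \cite[Theorem~3.5]{BorisNghia15} is exactly that, under prox-regularity, pointwise positive definiteness upgrades to the uniform version with a common modulus $\kappa$ over a whole graph-neighborhood, and that this modulus governs the Lipschitz behavior of $M_\gamma$. Thus I would check that the pair $(\kappa,r)$ furnished by the cited theorem can be carried verbatim into \eqref{eq:tilt}, that the radii $\gamma$ (localization of the argmin) and $r$ (localization in the graph) can be chosen consistently in the forward implication, and that any constraint-qualification or nondegeneracy hypothesis present in the general nonconvex statement is automatic here by convexity. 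Once these bookkeeping points are confirmed, the equivalence is immediate.
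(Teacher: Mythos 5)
Your proposal is correct and matches the paper's own treatment exactly: the paper states this theorem as a direct consequence of \cite[Theorem~3.5]{BorisNghia15} combined with \cite[Example~13.30]{Rockafellar98} (the latter supplying prox-regularity and subdifferential continuity of l.s.c.\ convex functions), which is precisely the specialization argument you outline. Your additional bookkeeping remarks on matching the localized argmin map and the uniform modulus are sensible but not developed further in the paper, which offers no separate proof beyond the citation.
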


\section{Newton methods with effective subspaces polyhedral regularization}\label{sec:polyhedral}
\setcounter{equation}{0}

In this section, we propose some nonsmooth Newton methods for solving the following composite optimization problem
\begin{equation}\label{p:CP}
    \min_{x\in \XX}\quad \varphi(x):=f(x)+g(x),
\end{equation}
where $f:\XX\to \R$ is a convex function that is  twice continuously differentiable and $g:\XX\to \oR$ is a proper piecewise linear/polyhedral convex function. Recall that  a proper function $g$ is piecewise linear/polyhedral and convex if and only if  its epigraph is {\em polyhedral}, i.e., it is the intersection of finitely many closed half-spaces in $\XX\times \R$ \cite[Definition 2.47 and Theorem~2.49]{Rockafellar98}.

Suppose that $\ox$ is an optimal solution of problem~\eqref{p:CP}. To motivate our algorithm, let us recall the following coderivative-based Newton methods in \cite[Algorithm~1 and Algorithm~2]{KhanhMordukhovichPhat25} below.
\begin{algorithm}[H]
\caption{The coderivative-based Newton method}
Input: $x_0\in \mathbb{X}, \eta > 0$.
\begin{algorithmic}[1]\label{algo:NewCod}
  %\scriptsize
  \STATE Find $(y_k,z_k)\in \gph \partial g$ satisfying 
  \begin{equation}\label{eq:bound-cond0}
      \|z_k - \oz\| + \|y_k - \ox\|\le \eta \|x_k-\ox\|,
  \end{equation}
  where $\oz:=-\nabla f(\ox)$.
  \STATE Find a Newton direction $d_k\in \XX$ satisfying
  \begin{equation}\label{eq:NewStep}
  -\nabla^2 f(y_k)d_k+z_k+\nabla f(y_k)\in \partial^2 g(y_k|\, z_k)(d_k).
  \end{equation}
  \STATE Update $x_{k+1}=y_k-d_k$. 
\end{algorithmic}
\end{algorithm}
When $g\equiv 0$, we may choose $\eta=1$,  $y_k=x_k$, and $z_k=\oz=0$ to make \eqref{eq:bound-cond0} holds. In this case, inclusion \eqref{eq:NewStep} reduces exactly to the original Newton step in \eqref{Al:Newton}
\begin{equation}\label{eq:clasNew}
\nabla^2 f(x_k)d_k=\nabla f(x_k).
\end{equation}
Thus, {\bf Algorithm~\ref{algo:NewCod}} is a natural extension of the classical Newton method \eqref{Al:Newton} to problem~\eqref{p:CP}, at which the second-order information of the nonsmooth function $g$ is also involved via the generalized Hessian $\partial^2 g(y_k|\,z_k)$ in ~\eqref{eq:NewStep}; see also \cite[Chapter~9]{Mordukhovich24} for further history and state-of-art of this algorithm. For the general problem \eqref{p:CP}, the {\em approximation step} of finding $(y_k,z_k)\in \gph \partial g$ satisfying \eqref{eq:bound-cond0} was initiated  in \cite{GfrererSIOPT21}   and  used recently for several Newton methods in \cite{GfrererCOA25,Gfrerer25,KhanhMordukhovichPhat25,KhanhMordukhovichPhat24}. We may choose the pair
\begin{equation}\label{eq:prox}
y_k = \prox_{\alpha g}(x_k-\alpha\nabla f(x_k))\quad \mbox{and}\quad z_k = \frac{x_k-y_k}{\alpha} - \nabla f(x_k) \quad \mbox{with some }\quad \al>0
\end{equation}
from the proximal gradient method \cite{BeckTeboulle09} to satisfy this condition; see, e.g., \cite{Gfrerer25, KhanhMordukhovichPhat25}. 

The generalized Newton step \eqref{eq:NewStep} has some roots in \cite{MordukhovichSarabi21} and has been fully developed in the recent paper \cite{KhanhMordukhovichPhat25}. In particular, \cite[Theorem~3.11 and Theorem~4.10]{KhanhMordukhovichPhat25} show that if the subdifferential mapping $\partial \varphi$ is {\em metrically  regular} at the optimal solution $\ox$ of problem~\eqref{p:CP} for $0$ in the sense of \cite[Section~3E]{Dont-Rock} and $\partial g$ is {\em semismooth}$^*$ at $(\ox,-\nabla f(\ox))$ in the sense of \cite[Definition~3.1]{GfrererSIOPT21}, then the sequence $\{x_k\}$ converges {\em superlinearly} to $\ox$, i.e., 
\[
\lim_{k\to\infty}\dfrac{\|x_{k+1}-\ox\|}{\|x_k-\ox\|}=0,
\]
provided that the initial $x_0$ is close enough to $\ox$. The results in \cite{KhanhMordukhovichPhat25} work for more general frameworks at which the functions $f$ and $g$ do not need to be convex. For our convex setting in \eqref{p:CP}, the aforementioned metric regularity condition on the subdifferential mapping $\partial \varphi $ at $\ox$ for $0$ is equivalent to the tilt-stability of $\varphi$ at the minimizer $\ox$ in Definition~\ref{Den:Tilt}. Moreover, the subdifferential mapping $\partial g$ of our convex polyhedral function $g$ is also a particular class of semismooth$^*$ multifunctions \cite[Proposition~2.10]{Gfrerer22}. 

The main challenge for {\bf Algorithm~\ref{algo:NewCod}} lies in the computation of the generalized Hessian in practice. Moreover, finding a Newton direction satisfying the inclusion in \eqref{eq:NewStep} seems to be a very nontrivial step. To address these issues, we propose a simple way of finding such $d_k$ without computing the generalized Hessian of $g$. Motivated by the classical Newton step in \eqref{eq:clasNew}, a solution $d_k$ of the following system  
\begin{equation}\label{eq:Motiv}
    -\nabla^2 f(y_k)d_k+z_k+\nabla f(y_k)=0\quad \mbox{and}\quad 0\in \partial^2 g(y_k|\,z_k)(d_k)
\end{equation}
 satisfies \eqref{eq:NewStep} obviously. As $g$ is a polyhedral function, so is its Legendre-Fenchel conjugate $g^*$. It follows from \cite[Equation~(4.9)]{BorisRock12} that 
 \begin{equation}\label{eq:BoTe}
 0\in \partial^2 g(y_k|\, z_k)(d_k)\quad \mbox{if and only if}\quad d_k\in \partial^2 g^*(z_k|\,y_k)(0)=\para \partial g^*(z_k),
 \end{equation}
which is the parallel space of $\partial g^*(z_k)$. This subspace is referred to as the {\em effective subspace} of $g$ at $y_k$ for $z_k$ in our Newton methods. It actually signifies the null/kernel space of the generalized Hessian of $g$ at $y_k$ for $z_k\in \partial g(y_k)$. The system \eqref{eq:Motiv} turns into a linear system 
\[
-\nabla^2 f(y_k)d_k+z_k+\nabla f(y_k)=0\quad\mbox{and}\quad d_k\in L_k:=\para \partial g^*(z_k),
\]
which has a solution when $\nabla^2 f(y_k)(L_k)=\XX$. This condition may fail, leading to the above linear system may have no solution. As $\nabla^2 f(y_k)$ is positive semidefinite, the first equation hints us that $d_k$ is a solution of the following quadratic optimization problem 
\[
\min_{d\in \XX}\quad \frac{1}{2}\la \nabla^2 f(y_k)d,d\ra-\la z_k+\nabla f(y_k),d\ra.
\]
Since we also need the solution $d_k$ to belong to the subspace $L_k$ as in \eqref{eq:Motiv}, we slightly modify the above optimization problem to
\begin{equation}\label{p:Sub}
  \min_{d\in L_k}\quad \frac{1}{2}\la \nabla^2 f(y_k)d,d\ra-\la z_k+\nabla f(y_k),d\ra,
  \end{equation}
which means that $d_k$ is a solution to the following linear system (instead of the proposed system \eqref{eq:Motiv})
\begin{equation}\label{eq:Incl}
-\nabla^2 f(y_k)d_k+z_k+\nabla f(y_k)\in L_k^\perp \quad\mbox{and}\quad d_k\in L_k.
\end{equation}
Indeed, we show later that this system has a unique solution when $x_0$ is close to the tilt stable minimizer $\ox$. These discussions motivate us to design our Newton method with effective subspaces $L_k$ mentioned in the Introduction.
\begin{algorithm}[H]
\captionsetup{labelformat=empty}
\caption*{{\bf Algorithm 1}: Newton method with effective subspaces}
Input: $x_0\in \mathbb{X}, \eta > 0$.
\begin{algorithmic}[1]
  %\scriptsize
  \STATE Find $(y_k,z_k)\in \gph \partial g$ satisfying \eqref{eq:bound-cond0}.
  \STATE Define the effective subspace $L_k:={\rm par}\, \partial g^*(z_k)$ and find an optimal solution $d_k\in L_k$ of problem \eqref{p:Sub}.
  \STATE Update $x_{k+1}=y_k-d_k$. 
\end{algorithmic}
\end{algorithm}

In Corollary~\ref{coro:compare}, when $g$ is either the indicator or the support function of a polyhedral convex set, we show that our Newton step \eqref{p:Sub} or \eqref{eq:Incl} is indeed a particular case of \eqref{eq:NewStep} and the recent Newton method \cite[Algorithm~4.2]{Gfrerer25} that uses the so-called {\em Subspace Containing} (SC) {\em derivative} of the subdifferential mapping $\partial g$, which is also a second-order structure. The idea of using effective subspaces to find a Newton direction in \eqref{p:Sub} is also close to the latter. However, comparing with other algorithms established recently, it seems that our effective subspaces $L_k$ have more explicit and computable settings. Most importantly, with these effective subspaces, we are able to prove next that {\bf Algorithm~\ref{algo:poly0}} acquires quadratic convergence around the tilt stable minimizer $\ox$, which is the desirable complexity for Newton methods instead of superlinear convergence obtained in \cite{Gfrerer25,KhanhMordukhovichPhat25}. Although our {\bf Algorithm~\ref{algo:poly0}} is influenced by other Newton methods in \cite{Gfrerer25,KhanhMordukhovichPhat25}, the proof of our main result for the quadratic convergence of {\bf Algorithm~\ref{algo:poly0}} is completely different. 

Before proceeding to the main theorem, let us recall the recent result in \cite[Theorem~4.3]{CuiHoheiseiNghiaSun24} stating that $\ox$ is a tilt stable minimizer of $\varphi$ in our setting \eqref{p:CP} if and only if $-\nabla  f(\ox)\in \partial g(\ox)$ and
\begin{equation}\label{eq:TiltPar}
\Ker \nabla^2 f(\ox) \cap \para \partial g^*(-\nabla f(\ox))=\{0\}, 
\end{equation}
which is rather simple to check at $\ox$ via linear algebra. The appearance of the effective subspace $\para \partial g^*(-\nabla f(\ox))$ in this characterization also serves as another motivation for our {\bf Algorithm~\ref{algo:poly0}} above. 

\begin{Theorem}[Quadratic convergence of the Newton method with effective subspaces] \label{theo:quad-poly}
Consider problem \eqref{p:CP} in which $f:\XX\to \R$ is a twice continuously differentiable and convex function and $g:\XX\to \oR$ is a convex polyhedral  function. Suppose that $\bar{x}$ is a tilt stable minimizer of the function $\varphi$ in \eqref{p:CP}, i.e., $-\nabla  f(\ox)\in \partial g(\ox)$ and condition~\eqref{eq:TiltPar} holds, and that the Hessian $\nabla^2 f(x)$ is Lipschitz continuous around $\ox$. Then there exists $\ve>0$ such that for any starting point $x_0\in \mathbb{B}_{\ve}(\ox)$, we have the assertions: 

\begin{itemize}
    \item[{\bf(i)}] The optimal solution $\{d_k\}$ of problem~\eqref{p:Sub} is well-defined and unique.

    \item[{\bf(ii)}]  The sequence $\{x_k\}$ generated by {\bf Algorithm~\ref{algo:poly0}} converges quadratically to $\ox$.
    
\end{itemize}

\end{Theorem}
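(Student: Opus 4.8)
The plan is to isolate a purely geometric lemma about the effective subspaces and then feed it into a standard Newton contraction estimate. Throughout write $\overline L:=\para\partial g^*(\oz)$ and $K:=\partial g(\ox)$, so that $\oz\in K$ and, by the characterization \eqref{eq:TiltPar}, $\Ker\nabla^2 f(\ox)\cap\overline L=\{0\}$. Since $\nabla^2 f(\ox)$ is positive semidefinite, this intersection condition is equivalent to the existence of $\mu_0>0$ with $\la\nabla^2 f(\ox)d,d\ra\ge\mu_0\|d\|^2$ for all $d\in\overline L$ (minimize the quadratic form over the unit sphere of $\overline L$, which is compact and avoids the kernel).

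First I would establish the structural lemma: there is $\ve>0$ such that for every $(y,z)\in\gph\partial g\cap\B_\ve(\ox,\oz)$, writing $L_z:=\para\partial g^*(z)$, one has
\[
L_z\subseteq\overline L,\qquad y-\ox\in L_z,\qquad z-\oz\in L_z^{\perp}.
\]
The engine is the polyhedral reduction to the directional derivative: for piecewise linear $g$ there is a neighborhood of $\ox$ on which $g(\ox+h)-g(\ox)$ coincides with its (sublinear, polyhedral) directional derivative $g'(\ox;\cdot)=\sigma_K$, whose subdifferential at $0$ is exactly $K$ and whose conjugate is $\delta_K$. Consequently, shrinking $\ve$, one gets $\partial g(y)\subseteq K$ (so $z\in K$) and $\partial g^*(z)$ coincides near $\ox$ with $\ox+N_K(z)$, whence $L_z=\Span N_K(z)$ and $y-\ox\in N_K(z)\subseteq L_z$, giving the middle inclusion for free. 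Because $K$ is polyhedral, $K$ agrees with $\oz+T_K(\oz)$ near $\oz$, so $N_K(z)=N_{T_K(\oz)}(z-\oz)=T_K(\oz)^{\circ}\cap(z-\oz)^{\perp}$ for $z$ near $\oz$, where $T_K(\oz)^{\circ}$ is the polar cone. The inclusion $N_K(z)\subseteq T_K(\oz)^{\circ}=N_K(\oz)$ yields $L_z\subseteq\Span N_K(\oz)=\overline L$, and the identity $N_K(z)\subseteq(z-\oz)^{\perp}$ yields $z-\oz\in L_z^{\perp}$. I expect this lemma to be the main obstacle, since it is where the polyhedrality of $g$ is genuinely used and where the role of the approximation step \eqref{eq:bound-cond0}, which forces $(y_k,z_k)$ into $\B_\ve(\ox,\oz)$, must be reconciled with the combinatorial face structure of $\gph\partial g$.

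With the lemma in hand, part {\bf(i)} is routine. By continuity of $\nabla^2 f$ I shrink $\ve$ so that $\la\nabla^2 f(y)d,d\ra\ge\tfrac{\mu_0}{2}\|d\|^2$ for all $y\in\B_\ve(\ox)$ and all $d\in\overline L$; since $L_k=L_{z_k}\subseteq\overline L$, the quadratic objective in \eqref{p:Sub} is strongly convex (with modulus $\mu:=\mu_0/2$) on the subspace $L_k$, so its minimizer $d_k$ exists and is unique, characterized by the linear system \eqref{eq:Incl}.

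For part {\bf(ii)}, set $w_k:=y_k-\ox$. The lemma gives $w_k\in L_k$, hence $x_{k+1}-\ox=w_k-d_k\in L_k$. The optimality condition \eqref{eq:Incl} reads $\la\nabla^2 f(y_k)d_k-(z_k+\nabla f(y_k)),d\ra=0$ for all $d\in L_k$. Using $\oz=-\nabla f(\ox)$ together with a Taylor expansion of $\nabla f$ around $y_k$ and Lipschitz continuity of $\nabla^2 f$ with modulus $M$, I write $\nabla f(y_k)-\nabla f(\ox)=\nabla^2 f(y_k)w_k-E_k$ with $\|E_k\|\le\tfrac{M}{2}\|w_k\|^2$, so that
\[
\la\nabla^2 f(y_k)(d_k-w_k),d\ra=\la z_k-\oz,d\ra-\la E_k,d\ra\qquad\forall d\in L_k.
\]
Testing with $d=d_k-w_k\in L_k$, the first term on the right vanishes because $z_k-\oz\in L_k^{\perp}$, while strong convexity bounds the left side below, giving $\mu\|d_k-w_k\|^2\le\|E_k\|\,\|d_k-w_k\|$. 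Therefore
\[
\|x_{k+1}-\ox\|=\|d_k-w_k\|\le\frac{M}{2\mu}\|w_k\|^2\le\frac{M\eta^2}{2\mu}\|x_k-\ox\|^2,
\]
where the last step uses $\|w_k\|\le\eta\|x_k-\ox\|$ from \eqref{eq:bound-cond0}. Finally, shrinking $\ve$ so that $\tfrac{M\eta^2}{2\mu}\,\ve\le\tfrac12$ makes the iteration keep the iterates in $\B_\ve(\ox)$ and contract, so a straightforward induction delivers both the well-posedness at every step and the quadratic rate \eqref{eq:Qua}.
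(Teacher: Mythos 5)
Your proof is correct, and it reaches the same three pivotal facts as the paper's argument --- namely $x_{k+1}-\ox\in L_k$, $z_k-\oz\in L_k^{\perp}$, and coercivity of $\nabla^2 f(y_k)$ on $L_k$ --- but it derives each of them by a genuinely different route. Where the paper obtains the orthogonality $P_{L_k}(z_k-\oz)=0$ from the Reduction Lemma applied to $\gph N_{\epi g}$ and the critical cone of $\epi g$, you get it (together with the extra inclusions $L_k\subseteq\overline L$ and $y_k-\ox\in L_k$) from the local representation $g(\ox+h)=g(\ox)+\sigma_K(h)$ with $K=\partial g(\ox)$, which reduces everything to elementary facts about normal cones of the single polyhedral set $K$; this is more self-contained and makes the role of polyhedrality transparent. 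Likewise, for the coercivity in part (i) the paper invokes the tilt-stability inequality \eqref{eq:tilt} at the nearby pair $(y_k,v_k)$ together with the second-order sum rule and the identity \eqref{eq:BoTe}, whereas you use only the static kernel condition \eqref{eq:TiltPar} at $\ox$, compactness of the unit sphere of $\overline L$, continuity of $\nabla^2 f$, and the inclusion $L_k\subseteq\overline L$ --- avoiding generalized Hessians entirely. (Note that your argument therefore genuinely needs the inclusion $L_k\subseteq\overline L$, which the paper never requires; it is correctly supplied by your lemma via $N_K(z)\subseteq N_K(\oz)$ for $z\in K$ near $\oz$.) The final contraction estimate is equivalent to the paper's: testing the stationarity condition of \eqref{p:Sub} with $d=d_k-w_k$ is the same computation as the paper's upper/lower bounds on $P_{L_k}\bigl(\nabla^2 f(y_k)(x_{k+1}-\ox)\bigr)$. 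The trade-off is that your route leans on the local exactness of the directional derivative for polyhedral convex functions (which you rightly flag as the crux), while the paper's Reduction Lemma argument is the standard citation-backed way to package that same polyhedral geometry.
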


\begin{proof}
Since $\ox$ is a tilt stable local minimizer to problem \eqref{p:CP}, the characterization from \eqref{eq:tilt} holds with respect to some constants $\kappa,r>0$. Further, we may shrink $r$ if needed so that there exists $\ell >0$ satisfying
\begin{equation}\label{eq:e-2}
\begin{aligned}
    &\la z,w\ra \ge \kappa \|w\|^2 \quad \forall\, z\in \partial^2 \varphi (x|\, v)(w),\ \forall\, (x,v)\in \gph \partial \varphi\cap \mathbb{B}_{r}(\ox,0),\\
    &\|\nabla f(x)-\nabla f(\ox)\| \le  \ell \|x-\ox\|, \quad \forall\, x\in \mathbb{B}_{r}(\ox),\\
    &\|\nabla^2 f(x)-\nabla^2 f(y)\| \le  \ell \|x-y\|, \quad \forall\, x,y\in \mathbb{B}_{r}(\ox).
\end{aligned}
\end{equation}
Let us pick a universal constant $0<\ve < \min\left\{\dfrac{r}{2\eta\left(1+\ell\right)};\; \dfrac{\kappa}{\ell \eta^2}\right\}$ and set $\overline{C}:=\dfrac{\ell\eta^2}{2\kappa}$. By the choice of $\ve$, it follows that $2\overline{C}\ve < 1$.

We claim {\bf (i)} and {\bf (ii)} by induction as follows. Supposing $x_k\in \mathbb{B}_{\ve}(\ox)$ for some $k\in \N \cup \{0\}$, we will show that the optimal solution $d_k$ to problem \eqref{p:Sub} exists uniquely and the inequalities $\|x_{k+1}-\ox\|\le \overline{C}\|x_k -\ox\|^2 < \ve$ hold. First, observe from step $1$ in {\bf Algorithm~\ref{algo:poly0}} that 
\begin{equation}\label{eq:y0}
\begin{aligned}
    \|y_k - \ox\| \le \eta\|x_k-\ox\| \le \eta \ve < \dfrac{r}{2}.
\end{aligned}
\end{equation}
Define $v_k:=\nabla f(y_k)+z_k\in \partial \varphi(y_k)$. We obtain from \eqref{eq:bound-cond0} that  
\begin{equation}\label{eq:v0}
\|v_k\|=\|\nabla f(y_k)-\nabla f(\ox) +z_k-\oz\|\le \ell\|y_k-\ox\|+\|z_k-\oz\|\le (1+\ell)\eta\ve<\frac{r}{2}.
\end{equation}
One gets from the first inequality of \eqref{eq:e-2} at $(x,v)=(y_k,v_k)\in \gph \partial \varphi\cap \B_r(\ox,0)$ due to \eqref{eq:y0}-\eqref{eq:v0} and the sum rule for the generalized Hessian \cite[Proposition~1.121]{Mordukhovich06} that 
\[
\la \nabla^2 f(y_k)w,w\ra+\la u,w\ra \ge \kk\|w\|^2\quad \mbox{for all}\quad u\in \partial^2 g(y_k|\, z_k)(w). 
\]
By choosing $u=0$, the latter inequality together with the identity from~\eqref{eq:BoTe} gives us that 
\begin{equation}\label{eq:Noz}
\la \nabla^2 f(y_k)w,w\ra\ge \kk\|w\|^2\quad \mbox{for all}\quad w\in L_k=\para\partial g^*(z_k),
\end{equation}
which means $\nabla^2 f(y_k)$ is positive definite over $L_k$. This guarantees the uniqueness of the optimal solution $d_k$ of the quadratic optimization problem~\eqref{p:Sub}.

Next, let us verify the estimates $\|x_{k+1}-\ox\|\le \overline{C}\|x_k-\ox\|^2 < \ve$. To proceed, by using the variational geometry of convex polyhedral sets given in \cite[Theorem~2E.3]{Dont-Rock}, we may shrink $r>0$ if needed so that the continuity of $g$ relative to its domain deduces the relation 
\[
T_{\mathrm{epi}\, g}(x,g(x)) \supset T_{\mathrm{epi}\, g}(\ox,g(\ox)) \quad \mbox{for any}\quad x\in \dom g\cap\B_r(\ox)
\]
via the tangent cone in \eqref{eq:tangent}. Then, the polarity between the normal cone and tangent cone to convex sets yields 
\[
N_{\mathrm{epi}\, g}(x,g(x)) \subset N_{\mathrm{epi}\, g}(\ox,g(\ox)),
\]
which further ensures that 
\begin{equation*}
\partial g(x)\subset \partial g(\ox)\quad \mbox{for any}\quad x\in \dom g\cap \B_r(\ox).
\end{equation*}
It follows from \eqref{eq:y0} that  
\begin{equation*}
z_k\in \partial g(y_k)\subset \partial g(\ox),
\end{equation*}
which implies that  $\ox,y_k \in \partial g^* (z_k)$. Consequently, we derive that 
\begin{equation}\label{eq:inL_k}
    x_{k+1} - \ox =y_k - d_k-\ox \in \mathrm{span}\, \big(\partial g^* (z_k)-\ox\big)=  L_k.
\end{equation}
By \eqref{eq:Incl}, we may write 
\begin{equation}\label{Fermat-2}
    \nabla^2 f(y_k)d_k = z_k + \nabla f(y_k) - e_k \ \text{ for some }\ e_k \in L_k^{\perp}.    
\end{equation}
Employing \eqref{Fermat-2} gives us that 
\begin{equation}
\begin{aligned}
    \nabla^2 f(y_k)(x_{k+1}-\ox) &= \nabla^2 f(y_k)(y_k-\ox) - \nabla^2 f(y_k)d_k \\
    &= \nabla^2 f(y_k)(y_k-\ox) -z_k - \nabla f(y_k) + e_k \\
    &\, = \nabla^2 f(y_k)(y_k - \ox) + \nabla f(\ox) -\nabla f(y_k) -(z_k-\oz) + e_k.
\end{aligned}\label{eq:Newton-clas-2}
\end{equation}

We claim next that 
\begin{equation}\label{eq:projzkzbar-2}
P_{L_k}(z_k-\oz)=0.
\end{equation}
Since $\epi g$ is a convex polyhedral set in $\mathbb{X}\times \R$, the Reduction Lemma from \cite[Lemma~2E.4]{Dont-Rock} lends us a neighborhood $U\times V$ of $(0,0)\in (\mathbb{X}\times \R)^2$, independent of $k$, on which the identity 
\begin{equation}\label{eq:reduc-lem}
    (U \times V) \cap \big(\gph N_{\mathrm{epi}\,g} - \big(\ox,g(\ox),\oz,-1\big)\big) = (U \times V) \cap \gph N_{K}
\end{equation}
holds, where $K:=T_{\mathrm{epi}\,g}(\ox,g(\ox))\cap \big\{(\oz,-1)\big\}^{\perp}$ is known as the {\em critical cone} to $\mathrm{epi}\, g$ at $(\ox,g(\ox))$ for $(\oz,-1)$. Let us shrink $r>0$ if necessary so that $\mathbb{B}_r(0)\subset V$, which yields $(z_k-\oz,0)\in V$ by \eqref{eq:bound-cond0}. As $g$ is continuous on its domain, there exists some fixed number $\mu_k>0$ such that for any  $y\in \partial g^*(z_k)\cap\B_{\mu_k}(\ox)$,  we have  $(y,g(y))\in (\ox,g(\ox))+ U$. It follows that  
\[
\big(y-\ox,g(y)-g(\ox),z_k-\oz,0\big)\in (U \times V) \cap \big(\gph N_{\mathrm{epi}\,g} - \big(\ox,g(\ox),\oz,-1\big)\big),
\]
which yields further by \eqref{eq:reduc-lem} the relationship 
\begin{equation*}
    (z_k-\oz,0)\in N_K \big( y-\ox,g(y)-g(\ox)\big).
\end{equation*}
Since $K$ is a closed convex cone, this tells us that
\begin{equation}\label{eq:<>=0}
\la z_k-\oz,y-\ox\ra = \la z_k-\oz,y-\ox\ra + 0\cdot (g(y)-g(\ox)) =0.
\end{equation}
Hence, we have $z_k-\oz\perp (\partial g^*(z_k)-\ox)\cap \B_{\mu_k}(0)$, which yields  
\[
z_k-\oz\perp \Span \big((\partial g^*(z_k)-\ox)\cap \B_{\mu_k}(0)\big)=\Span \big(\partial g^*(z_k)-\ox\big)=L_k,
\]
where the last equality holds due to the fact that $\ox\in \partial g^* (z_k)$. Our claim~\eqref{eq:projzkzbar-2} is verified.

From \eqref{eq:Newton-clas-2} and \eqref{eq:projzkzbar-2},  we have
\begin{equation*}
\begin{aligned} P_{L_k}\big(\nabla^2 f(y_k)(x_{k+1}-\ox)\big) = P_{L_k}\Big(\nabla^2 f(y_k)(y_k-\ox)+\nabla f(\ox)-\nabla f(y_k)\Big),
\end{aligned}
\end{equation*}
which thus implies by the local Lipschitz continuity of $\nabla^2 f$ around $\ox$ and \eqref{eq:e-2}-\eqref{eq:y0} that
\begin{equation}
\begin{aligned}
    \left\|P_{L_k}\big(\nabla^2 f(y_k)(x_{k+1}-\ox)\big)\right\| 
    &\le \left\|\nabla^2 f(y_k)(y_k-\ox) + \nabla f(\ox)-\nabla f(y_k)\right\| \\
    &=\left\|\int_0^1 \left[\nabla^2 f (y_k)-\nabla^2 f\big(y_k + t(\ox-y_k)\big)\right](y_k-\ox)\,dt\right\|\\
    &\le \int_0^1 \left\|\nabla^2 f (y_k)-\nabla^2 f\big(y_k + t(\ox-y_k)\big)\right\| \|y_k-\ox\|\, dt\\
    &\le \dfrac{\ell}{2}\|y_k-\ox\|^2.
    \label{eq:1-di-2}
\end{aligned}
\end{equation}
In addition, note from \eqref{eq:inL_k} that $x_{k+1} - \ox \in L_k$. Applying \eqref{eq:Noz} with $w=x_{k+1}-\ox\in L_k$ gives us that 
\begin{equation*}
    \la \nabla^2 f(y_k)(x_{k+1}-\ox),x_{k+1}-\ox\ra \ge \kappa \|x_{k+1}-\ox\|^2,
\end{equation*}
which by the fact $x_{k+1}-\ox\in L_k$ and the self-adjoint property of $P_{L_k}$ is equivalent to 
\begin{equation*}
    \Big\la {P_{L_k}\big(\nabla^2 f(y_k)(x_{k+1}-\ox)\big),x_{k+1}-\ox}\Big\ra \ge \kappa \|x_{k+1}-\ox\|^2.
\end{equation*}
The Cauchy-Schwarz's inequality then deduces
\begin{equation}\label{eq:2-di-2}
     \left\|P_{L_k}\big(\nabla^2 f(y_k)(x_{k+1}-\ox)\big)\right\| \ge \kappa \|x_{k+1}-\ox\|.
\end{equation}
Combining \eqref{eq:y0}, \eqref{eq:1-di-2} and \eqref{eq:2-di-2} with the choice of $\ve$ gives us 
\begin{equation*}
\|x_{k+1}-\ox\|\le \dfrac{\ell}{2\kappa}\|y_k-\ox\|^2 \le \dfrac{\ell \eta^2}{2\kappa} \|x_k-\ox\|^2 = \overline{C}\|x_k-\ox\|^2 \le  \dfrac{1}{2}\|x_k-\ox\| < \varepsilon.
\end{equation*}
This justifies our inductive step and thus yields for each $k\in \N \cup \{0\}$ the optimal solution $d_k$ of \eqref{p:Sub} together with the estimate $\|x_{k+1}-\ox\|\le \overline{C}\|x_k-\ox\|^2$, where $\overline{C}$ is independent of $k$. The assertions in {\bf (i)} and {\bf (ii)} are thus justified.
\end{proof}
\vspace{0.05in}

\begin{Remark}{\rm In the proof of our main Theorem~\ref{theo:quad-poly}, the polyhedrality of $g$ plays an essential role, whereas the function $f$ does not need to be convex. Tilt stability at $\ox$ leads to \eqref{eq:Noz}, which guarantees the positive definiteness of $\nabla^2 f(y_k)$ on $L_k$ and the uniqueness of the Newton direction $d_k$ solving \eqref{p:Sub}. However, to be consistent with our later applications as well as the developments for ISTA and FISTA in \cite{BeckTeboulle09,ChambolleDossal15}, which we will use for the approximation step~\eqref{eq:bound-cond0}, and to avoid additional technical details, we still keep the assumption that $f$ is convex throughout the paper.     
}    
\end{Remark}

As discussed after {\bf Algorithm~\ref{algo:NewCod}}, a practical way to choose $(y_k,z_k)\in \gph \partial g$ satisfying \eqref{eq:bound-cond0} is using the proximal gradient (a.k.a. ISTA) step as in \eqref{eq:prox}. We incorporate this step into our {\bf Algorithm~\ref{algo:poly0}} as follows and prove the corresponding quadratic convergence result for the sake of completeness.
\begin{algorithm}[H]
\caption{ISTA-Newton method with effective subspaces}
\textbf{Input}: $x_0\in \mathbb{X}, \alpha>0$. 
\begin{algorithmic}[1]\label{algo:poly-ista}
  %\scriptsize
  \STATE Compute $y_k := \prox_{\alpha g}(x_k-\alpha\nabla f(x_k))$.
  \STATE Compute $z_k := \frac{x_k-y_k}{\alpha} - \nabla f(x_k)$.
  \STATE Compute $L_k:={\rm par}\, \partial g^*(z_k)$ and find an optimal solution $d_k$ of
  \begin{equation*}
  \min_{d\in L_k}\quad \frac{1}{2}\la \nabla^2 f(y_k)d,d\ra-\la z_k+\nabla f(y_k),d\ra.
  \end{equation*}
  \STATE Update $x_{k+1}=y_k-d_k$. 
\end{algorithmic}
\end{algorithm}

\begin{Corollary}[Quadratic convergence of the ISTA-Newton method with effective subspaces] \label{coro:PGN} 
Under the assumptions of Theorem \ref{theo:quad-poly}, there exists $\varepsilon>0$ such that for each $x_0 \in \mathbb{B}_{\varepsilon}(\ox)$, {\bf Algorithm~\ref{algo:poly-ista}} produces a sequence $\{x_k\}$ converging quadratically to $\ox$.
\end{Corollary}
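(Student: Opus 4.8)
The plan is to recognize \textbf{Algorithm~\ref{algo:poly-ista}} as the particular instance of \textbf{Algorithm~\ref{algo:poly0}} in which the pair $(y_k,z_k)$ is produced by the proximal-gradient (ISTA) rule \eqref{eq:prox}. Indeed, Steps~3 and~4 of \textbf{Algorithm~\ref{algo:poly-ista}} coincide verbatim with Steps~2 and~3 of \textbf{Algorithm~\ref{algo:poly0}}. Consequently, once I show that the ISTA choice of $(y_k,z_k)$ meets the two requirements of Step~1 of \textbf{Algorithm~\ref{algo:poly0}} --- namely the graph membership $(y_k,z_k)\in\gph\partial g$ together with the approximation bound \eqref{eq:bound-cond0} for a suitable $\eta>0$ --- the sequence $\{x_k\}$ generated here is one of the sequences already covered by Theorem~\ref{theo:quad-poly}, and the quadratic convergence follows immediately.

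First I would verify the graph membership. By the resolvent characterization \eqref{eq:res}, the identity $y_k=\prox_{\alpha g}(x_k-\alpha\nabla f(x_k))$ is equivalent to $\frac{(x_k-\alpha\nabla f(x_k))-y_k}{\alpha}\in\partial g(y_k)$, and the left-hand quantity is exactly $z_k=\frac{x_k-y_k}{\alpha}-\nabla f(x_k)$; hence $(y_k,z_k)\in\gph\partial g$. Applying the same equivalence to the optimality relation $-\nabla f(\ox)\in\partial g(\ox)$ (part of the tilt-stability hypothesis) shows that $\ox=\prox_{\alpha g}(\ox-\alpha\nabla f(\ox))$, so $\ox$ is a fixed point of the forward--backward map with associated multiplier $\oz=-\nabla f(\ox)$.

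The technical core is the bound \eqref{eq:bound-cond0}. Using the nonexpansiveness of $\prox_{\alpha g}$ and the local Lipschitz continuity of $\nabla f$ with constant $\ell$ on $\B_r(\ox)$ from \eqref{eq:e-2}, for $x_k\in\B_r(\ox)$ I would estimate
\[
\|y_k-\ox\|\le\big\|(x_k-\ox)-\alpha\big(\nabla f(x_k)-\nabla f(\ox)\big)\big\|\le(1+\alpha\ell)\|x_k-\ox\|.
\]
Writing $z_k-\oz=\frac{(x_k-\ox)-(y_k-\ox)}{\alpha}-\big(\nabla f(x_k)-\nabla f(\ox)\big)$ and inserting the previous estimate gives $\|z_k-\oz\|\le\big(\frac{2}{\alpha}+2\ell\big)\|x_k-\ox\|$. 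Adding the two inequalities shows that \eqref{eq:bound-cond0} holds with the explicit constant $\eta:=1+\frac{2}{\alpha}+(\alpha+2)\ell$.

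With this value of $\eta$ fixed, Theorem~\ref{theo:quad-poly} supplies a radius $\ve>0$ (depending on $\eta$, $\ell$, and $\kappa$) so that for every $x_0\in\B_\ve(\ox)$ the Newton directions $\{d_k\}$ are well-defined and $\{x_k\}$ converges quadratically to $\ox$; since the proof of that theorem forces $\ve<r$, the iterates stay in $\B_r(\ox)$ and the Lipschitz estimate invoked above remains valid all along the trajectory. The main obstacle is precisely this verification of \eqref{eq:bound-cond0}: one must keep the iterates inside the neighborhood $\B_r(\ox)$ on which both the nonexpansiveness argument and the local Lipschitz bound on $\nabla f$ are available, which is exactly what the inductive radius control inherited from Theorem~\ref{theo:quad-poly} guarantees.
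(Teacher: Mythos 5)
Your proposal is correct and follows essentially the same route as the paper's proof: verify $(y_k,z_k)\in\gph\partial g$ via the resolvent characterization \eqref{eq:res}, then establish the bound \eqref{eq:bound-cond0} from nonexpansiveness of $\prox_{\alpha g}$ and the local Lipschitz continuity of $\nabla f$, and invoke the inductive argument of Theorem~\ref{theo:quad-poly}. Your constant $\eta=1+\tfrac{2}{\alpha}+(\alpha+2)\ell$ is algebraically identical to the paper's $\eta=\tfrac{(2+\alpha)(1+\ell\alpha)}{\alpha}$, and you correctly fix $\eta$ before the radius $\ve$ is chosen.
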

\begin{proof}
The algorithm starts with some parameter $\alpha>0$. To ensure the quadratic convergence via the proof by induction in Theorem \ref{theo:quad-poly}, it suffices to show the fulfillment of our boundedness condition \eqref{eq:bound-cond0} at each pair $(y_k,z_k)\in \gph \partial g$ $(k=0,1,\ldots)$, given that $x_k\in \mathbb{B}_{\ve}(\ox)$. Indeed, {\bf Algorithm~\ref{algo:poly-ista}} forces $z_k \in \partial g(y_k)$ due to \eqref{eq:res}. Let us set 
\begin{equation}\label{eq:eta}
\eta:=\dfrac{(2+\alpha)(1+\ell \alpha)}{\alpha},
\end{equation}
where $\ell$ is a Lipschitz modulus of $\nabla f$ around $\ox$. Pick $x_k\in \mathbb{B}_{\varepsilon}(\ox)$, where $\varepsilon$ is chosen as in Theorem \ref{theo:quad-poly}, satisfying in addition $\ve <r$ from \eqref{eq:e-2}. The {\em nonexpansiveness} property of $\prox_{\alpha g}$ implies that
\begin{equation}\label{eq:y0xbar-rem-1}
\begin{aligned}
    \|y_k - \ox\| &= \left\|\prox_{\alpha g} \big(x_k - \alpha \nabla f(x_k)\big) - \prox_{\alpha g} \big(\ox - \alpha \nabla f(\ox)\big)\right\| \\
    &\le \big\|(x_k -\ox) - \alpha \big(\nabla f(x_k) - \nabla f(\ox)\big)\big\| \le \big(1+\ell {\alpha}\big)\|x_k - \ox\|.
\end{aligned}
\end{equation}
Moreover, it follows from \eqref{eq:y0xbar-rem-1} and the Lipschitz continuity of $\nabla f$ on $\mathbb{B}_{\varepsilon}(\ox)$ that
\begin{equation*}
\begin{aligned}
    \|z_k - \oz\| &= \left\|\dfrac{x_k - y_k}{\alpha} - \nabla f(x_k) + \nabla f(\ox)\right\| \le \dfrac{\|x_k - \ox\|}{\alpha}+\dfrac{\|y_k - \ox\|}{\alpha} + \ell \|x_k - \ox\|,
\end{aligned}
\end{equation*} 
which together with \eqref{eq:y0xbar-rem-1} and \eqref{eq:eta} verifies \eqref{eq:bound-cond0} at the pair $(y_k,z_k)$. Our conclusion then follows from the inductive step as in the proof of Theorem \ref{theo:quad-poly}.
\end{proof}

\begin{Remark}\rm (Comparisons with active submanifolds Newton methods in \cite{BIM23,LewisWylie2021,wylie2019}) Compared to the Newton method with active Riemannian submanifolds built in \cite[Algorithm~1~\&~2]{BIM23} which also achieves  quadratic convergence, our convergence theory does not require the often-assumed \emph{non-degeneracy condition} $-\nabla f(\ox)\in \mathrm{ri}\, \partial g(\ox)$ in \cite[Definition~3.2]{BIM23}.  The latter property, which is indispensable in their construction named {\em r-structure critical point} $\ox$, is used to determine a manifold $\mathcal{M}$ such that the function $g$ is $\mathcal{C}^2$ on $\mathcal{M}$ and that  $x_k$ belongs to $\mathcal{M}$ for sufficiently large $k$. However, this condition is not always true, as shown in the following example.

On $\mathbb{X}=\R^2$, consider the functions 
\begin{equation*}
    f(x_1,x_2):= \dfrac{1}{2}\big\|(x_1,x_2)-(2,-1)\big\|^2 \ \text{ and }\ g(x_1,x_2):= |x_1|+|x_2|,\quad \text{ for }\quad (x_1,x_2)\in \R^2.
\end{equation*}
Note that $\nabla f(1,0) = (-1,1)$ and $\partial g(1,0) = \{1\} \times [-1,1]$. Hence, $\ox:= (1,0)$ is a local minimizer of $f+g$, as it satisfies the first-order optimality condition $0 \in \nabla f(\ox) + \partial g(\ox)$. Moreover, since $f+g$ is strongly convex, $\ox$ is a tilt stable minimizer to $f+g$. By performing as in {\bf Algorithm~\ref{algo:poly0}} with a starting point close enough to $\ox$, our iterates converge quadratically to $\ox$, as a result of Theorem \ref{theo:quad-poly}. However, observe that 
\begin{equation*}
    -\nabla f(\ox) = (1,-1)\notin \{1\}\times (-1,1) =  \mathrm{ri}\, \partial g(\ox),
\end{equation*}
i.e., the non-degeneracy condition is violated at $\ox$. 

Note further that in the setting of \cite[Example~2.3]{BIM23}, their \emph{optimal submanifold} for this example is $\mathcal{M}=\R \times \{0\}$. From the first sight, it may be natural to think this submanifold is the effective subspace $L:=\para \partial g^*(-\nabla f(\ox))$ in our approach. However, they are not the same, as  
\[
\para \partial g^*(-\nabla f(\ox))=\para \big(N_{\B_\infty}(1,-1)\big)=\para (\R_+\times \R_-)=\R^2,
\]
where $\B_\infty$ is the $\ell_\infty$ norm unit ball in $\R^2$. Moreover, the $\ell_1$ norm $g(x)$ is not smooth on $L$. This somewhat distinguishes our algorithms from those in \cite{BIM23,LewisWylie2021,wylie2019}. More specifically, for solving the more general optimization problem \eqref{p:CP} with $g(x)=\|x\|_1$ in $\R^n$, the update of the active submanifold (subspace) at iteration $k$ in \cite[Example~2.3]{BIM23} and \cite[Page 97]{wylie2019} only relies on $y_k$ as follows
\[
\mathcal{M}_k=\{x\in \R^n\mid  x_i=0 \mbox{ for } i\in I_k\}\quad \mbox{with}\quad I_k:=\{i\in \{1,\ldots,n\}\mid  (y_k)_i=0\}. 
\]
This is usually referred as the {\em active subspace/submanifold} as it is built up based on the {\em active set} at $y_k$. 
However, our effective subspace $L_k$ used to update Newton direction $d_k$ in \eqref{eq:NewStep} relies on $z_k\in \partial g(y_k)$ as  
\[
L_k=\para \big(N_{\B_\infty}(z_k)\big)=\{x\in \R^n\mid  x_j=0 \mbox{ for } j\in J_k\}\quad  \mbox{with}\quad J_k:=\{j\in \{1,\ldots,n\} \mid  |(z_k)_j|<1\}; 
\]
see also formula \eqref{eq:spanN} in Section~\ref{sec:exam}. 
Note that $J_k\subset I_k$ and therefore our effective subspaces $L_k$ are larger than $\mathcal{M}_k$. Moreover, the $\ell_1$ norm may not be smooth on $L_k$.  In practice, it seems that the performance of {\bf Algorithm~\ref{algo:poly0}} differs only slightly between these two updates when solving the $\ell_1$ regularized optimization problems. However, as described above, we are able to prove the quadratic convergence of {\bf Algorithm~\ref{algo:poly0}} with our effective subspaces $L_k$ without the non-degeneracy condition in \cite{BIM23,LewisWylie2021,wylie2019}. More importantly, a closed-form computation of our effective subspaces $L_k$ can be obtained explicitly by employing only the first-order information from $g^*$ for any polyhedral function $g$; see also Section~\ref{sec:exam} for explicit computations of other polyhedral regularizers.   \endproof

%Still with this notation, all the other required conditions in \cite[Definition~3.2]{BIM23} are satisfied at $\ox$ with respect to the $\mathcal{C}^2$ optimal submanifold $\mathcal{M}$.

%{\bf (ii)} Unlike \cite{BIM23}, which leaves the active submanifolds implicit, we give a constructive, step-by-step procedure to identify our active subspaces $L_k=\mathrm{par}\,\partial g^* (z_k)$ using only first-order information from $g^*$. In Section \ref{sec:exam}, we further derive explicit descriptions of the active subspaces for several machine-learning and statistics models, and use these formulas directly in the numerical implementation.
\end{Remark}

Next, we establish several connections between our strategy of determining Newton directions with other nonsmooth Newton schemes developed recently in \cite{GfrererCOA25,Gfrerer25,GfrererSIOPT21,Gfrerer22,KhanhMordukhovichPhat25,KhanhMordukhovichPhat24,MordukhovichSarabi21} for the special case when $g(x)$ is either the indicator or support function to a polyhedral set. 

\begin{Proposition}\label{prop:compare}
\rm 
Let $C$ be a polyhedral convex set in $\XX$. Suppose that  $g$ is either the indicator function $\delta_C$ or the support function $\sigma_C$. With $(\ox,\oz)\in \gph \partial g$ and $L=\para\partial g^*(\oz)$,  
there holds 
\begin{equation}\label{eq:LinN}
    L^\perp \times L \subset N_{\mathrm{gph}\,\partial g}(\ox,\oz).
\end{equation}
Moreover, we also have 
\begin{equation}\label{eq:SCD}
    L \times L^\perp \in \mathcal{S}\partial g(\ox,\oz),
\end{equation}
where $\mathcal{S}\partial g$ is the {\em SC derivative} \cite[Definition~3.3]{Gfrerer22} of the subdifferential mapping $\partial g$.
\end{Proposition}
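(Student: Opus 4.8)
The plan is to exhibit, in each of the two cases, a sequence of ``smooth'' points $(x_k,v_k)\in\gph\partial g$ converging to $(\ox,\oz)$ at which $\gph\partial g$ is locally an $n$-dimensional affine manifold whose tangent space is \emph{constantly} equal to $L\times L^\perp$. Granting this, both inclusions drop out simultaneously. For \eqref{eq:SCD}, the SC derivative $\mathcal{S}\partial g(\ox,\oz)$ is by its very definition \cite[Definition~3.3]{Gfrerer22} the collection of limits of tangent spaces taken at such graphically smooth points, so the constant limit $L\times L^\perp$ belongs to it; note $L\times L^\perp$ has dimension $\dim L+(n-\dim L)=n$, hence lies in the ambient family of $n$-dimensional subspaces underlying the SC derivative. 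For \eqref{eq:LinN}, at a smooth manifold point the regular normal cone is the orthogonal complement of the tangent space, so $\widehat N_{\gph\partial g}(x_k,v_k)=(L\times L^\perp)^\perp=L^\perp\times L$; feeding a fixed pair $(a,b)\in L^\perp\times L$ as the constant sequence together with $(x_k,v_k)\to(\ox,\oz)$ into the definition \eqref{def:limit-cone} of the limiting normal cone then gives $(a,b)\in N_{\gph\partial g}(\ox,\oz)$, i.e. $L^\perp\times L\subseteq N_{\gph\partial g}(\ox,\oz)$.

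The construction rests on a single building block, the local structure of $\gph N_C$ for a polyhedral $C$. I would first record that $\partial\delta_C=N_C$ and $\partial\sigma_C=N_C^{-1}$, so that $\gph\partial g$ is either $\gph N_C$ or its coordinate flip $R(\gph N_C)$ with $R(a,b):=(b,a)$, a linear isometry under which normal cones and tangent spaces transform equivariantly. For a face $F$ of $C$ and $x\in\ri F$, the normal cone $N_C(x)$ is a fixed polyhedral cone $N_F$ of dimension $n-\dim F$ contained in $(\para F)^\perp$, whence $\para N_F=(\para F)^\perp$; choosing in addition $v\in\ri N_F$ forces every nearby graph point to lie over $\ri F$, so near $(x,v)$ one has $\gph N_C=(\aff F)\times(\aff N_F)$ locally, a smooth $n$-manifold with tangent space $\para F\times(\para F)^\perp$.

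With this in hand the two cases are short. When $g=\delta_C$, I would take $F:=\partial g^*(\oz)=\partial\sigma_C(\oz)$, the face of $C$ exposed by $\oz$, so that $L=\para F$; since $\oz\in N_C(\ox)$ one has $\ox\in F$ and $\oz\in N_F$, and picking $x_k\to\ox$ in $\ri F$ and $v_k\to\oz$ in $\ri N_F$ produces smooth points of $\gph N_C=\gph\partial g$ with tangent space $\para F\times(\para F)^\perp=L\times L^\perp$. When $g=\sigma_C$, I would take $G$ to be the minimal face of $C$ with $\oz\in\ri G$, so that $N_C(\oz)=N_G$ and $L=\para\partial g^*(\oz)=\para N_C(\oz)=(\para G)^\perp$, i.e. $\para G=L^\perp$; since $\ox\in N_C(\oz)=N_G$, choosing $x_k\to\oz$ in $\ri G$ and $v_k\to\ox$ in $\ri N_G$ gives smooth points of $\gph N_C$ with tangent $\para G\times(\para G)^\perp=L^\perp\times L$, and applying the flip $R$ yields smooth points $(v_k,x_k)\to(\ox,\oz)$ of $\gph\partial\sigma_C$ with tangent $L\times L^\perp$, exactly as required.

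The main obstacle is the building-block paragraph, namely the faithful local description of $\gph N_C$ at the chosen points: one must check that it is genuinely an $n$-dimensional smooth manifold with the stated tangent space and, crucially, that no graph pieces coming from neighboring (higher- or lower-dimensional) faces intrude into the neighborhood. This is precisely where selecting the dual variable $v$ in the \emph{relative interior} of $N_F$ rather than on its relative boundary is indispensable, and where the polyhedral facts $\dim N_F=n-\dim F$ and $\para N_F=(\para F)^\perp$ are used. I expect these to follow either from the Reduction Lemma \cite[Lemma~2E.4]{Dont-Rock} already invoked in the proof of Theorem~\ref{theo:quad-poly}, applied to the critical cone at $(\ox,\oz)$, or directly from the variational geometry of polyhedra in \cite[Section~2E]{Dont-Rock}; the remaining bookkeeping (equivariance of tangent/normal objects under $R$, and the identification of $L$ with $\para F$ resp. $(\para G)^\perp$) is routine.
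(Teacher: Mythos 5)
Your proposal is correct, but it follows a genuinely different route from the paper's. The paper never constructs graphically smooth points: in Case I it quotes the Poliquin--Rockafellar face formula $N_{\gph\partial g}(\ox,\oz)=\{(F_1-F_2)^*\times(F_1-F_2)\}$ over nested closed faces of the critical cone $K=T_C(\ox)\cap\{\oz\}^\perp$, identifies $L=K-K$ from the relation $L=\{w\mid(0,w)\in N_{\gph\partial g}(\ox,\oz)\}$, and takes $F_1=F_2=K$; for \eqref{eq:SCD} it likewise quotes Gfrerer's closed-form expression $\mathcal{S}N_C(\ox,\oz)=\{(F-F)\times(F-F)^\perp\}$ and takes $F=K$. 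In Case II the bulk of the paper's work is an explicit computation with an inequality representation of $C$ showing that $(\para N_C(\oz))^\perp$ is a closed face of the critical cone $H=T_C(\oz)\cap\{\ox\}^\perp$, after which the same formula with $F_2=\{0\}$ applies. Your argument instead goes back to the definitions of the limiting normal cone and of $\mathcal{S}\partial g$ as outer limits over graphically smooth points, and exhibits the required sequence directly from the face structure of $C$ itself (points $x_k\in\ri F$ paired with $v_k\in\ri N_C(x_k)$), so it bypasses both quoted formulas and the critical-cone bookkeeping; the price is that you must verify the local product description $\gph N_C=\aff F\times\aff N_F$ near such points, which you correctly flag as the crux. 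That verification does go through: for $(x',v')\in\gph N_C$ near $(x,v)$ with $v\in\ri N_F$, closedness of the finitely many candidate normal cones forces $v\in N_C(x')$, hence $x'$ lies in the face exposed by $v$, which equals $F$ because $v\in\ri N_C(x)$ and $x\in\ri F$; the reverse inclusion uses relative openness of $\ri F$ and $\ri N_F$. Your approach is more self-contained and geometric and in fact recovers the full subspace $L^\perp\times L$ inside the regular normal cone at each smooth point; the paper's approach is shorter on the page because it leans on two citable structure theorems, and its Case II computation additionally exhibits the relevant face of $H$ explicitly, which is information your limit argument does not produce.
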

\begin{proof}
{\bf Case I}: $g(x)=\delta_C(x)$, the indicator function of $C$. Denote by
\begin{equation}\label{eq:Crit}
K:=K_C (\ox,\oz)=T_C(\ox)\cap\{\oz\}^\perp
\end{equation}
the critical cone of $C$ at $\ox$ for $\oz$. Recall that  
a cone $F$ in $\XX$ is called a {\em closed face} of $K$ if there exists some $v$ in $K^*$, the polar cone of $K$, such that $F=K\cap\{v\}^\perp$. The limiting normal cone to $\gph \partial g$ at $(\ox,\oz)$ is expressed by  \cite[Proposition~4.4]{PoliquinRock98} 
\begin{equation}\label{eq:Normal}
   N_{{\rm gph}\, \partial g}(\ox,\oz)=\{(F_1-F_2)^*\times (F_1-F_2) \mid \mbox{$F_1, F_2$ are closed faces of $K$ with } F_2\subset F_1\}.
\end{equation}
As $L=\{w\mid (0,w)\in N_{{\rm gph}\, \partial g}(\ox,\oz)\}$ by \eqref{eq:BoTe} and $0\in (F_1-F_2)^*$ for any choice of closed faces $F_1, F_2$ of $K$ in the above formula, we have
\[
L=\bigcup\{F_1-F_2\mid \mbox{$F_1, F_2$ are closed faces of $K$ with } F_2\subset F_1\}. 
\]
As $F_1-F_2\subset K-K$  for any pair of closed faces $F_1, F_2$ of $K$, and a particular choice is $F_1=F_2=K$, the above identity gives us $L=K-K.$ This together with~\eqref{eq:Normal} verifies \eqref{eq:LinN}. 

To justify \eqref{eq:SCD}, let us recall the formula in \cite[Equation~(29) in Example~3.29]{Gfrerer22} that 
\begin{equation}\label{eq:SCD0}
\mathcal{S}\partial g(\ox,\oz)=\mathcal{S}N_C(\ox,\oz)=\{(F-F)\times (F-F)^\perp\,|\, F \mbox{ is a closed face of } K\}. 
\end{equation}
As $L=K-K$, we obtain \eqref{eq:SCD} immediately  by choosing $F=K$ from the above formula.
 
{\bf Case II.} $g(x)=\sigma_C(x)=\delta_C^*(x)$. In this case, $L=\para \partial \delta_C(\oz)=\para N_C(\oz)$. Since $\partial g^*=(\partial g)^{-1}$, to justify \eqref{eq:LinN}, we just need to show that 
\begin{equation}\label{eq:SCD1}
L \times L^\perp\subset N_{{\rm gph}\, \partial g^*}(\oz,\ox)=N_{{\rm gph}\,N_C}(\oz,\ox). 
\end{equation}
The critical cone in this case has to be adjusted to $H=T_C(\oz)\cap\{\ox\}^\perp$. We claim next that $(\para N_C(\oz))^\perp$ is a closed face of $H$. Suppose that $C$ can be represented by
\[
C=\{z\in \XX\mid \la a_i,z\ra\le b_i,\, i=1,\ldots,n\}.
\]
With $\oz\in C$, we have 
\[
N_C(\oz)=\cone\{a_i\mid i\in I(\oz)\}\quad \mbox{with}\quad I(\oz):=\{i\in \{1, \ldots,n\}\mid \la a_i,\oz\ra=b_i\}. 
\]
Obviously, $\Span N_C(\oz)=\Span \{a_i\mid i\in I(\oz)\}$. Moreover, the tangent cone to $C$ at $\oz$ is computed by
\[
T_C(\oz)=\{d\in \XX\mid \la a_i,d\ra\le 0,\, i\in I(\oz)\}. 
\]
As $\ox\in N_C(\oz)$, we may write 
\[
\ox=\sum_{i\in J}\lm_i a_i
\]
for some index set $J\subset I(\oz)$ with $\lm_i>0$, $i\in J$. Thus, the critical cone is obtained by 
\[
H=T_C(\oz)\cap\{\ox\}^\perp=\{d\in \XX\mid \la a_i,d\ra=0,\, i\in J, \la a_i,d\ra\le 0,\, i\in I(\oz)\setminus J\}. 
\]
Define $u:=\sum_{i\in I(\oz)\setminus J} a_i$ if $ J\neq I(\oz)$ and $u:=0$ if $J= I(\oz)$. Note that $u\in H^*$ and that the closed face $F_1:=H\cap \{u\}^\perp$ is computed by 
\[
\begin{aligned}
F_1&=\{d\in \XX\mid \la a_i,d\ra=0,\, i\in J, \la a_i,d\ra\le 0,\, i\in I(\oz)\setminus J,  \sum_{i\in I(\oz)\setminus J} \la a_i,d\ra=0\}\\
&=\{d\in \XX\mid \la a_i,d\ra=0,\, i\in J, \la a_i,d\ra=0,\, i\in I(\oz)\setminus J\},
\end{aligned}
\]
which is exactly $(\para N_C(\oz))^\perp$. Combining this with the formula \eqref{eq:Normal} with $F_2=0$ (but replacing $(\ox,\oz)$ there with $(\oz,\ox)$ and $K$ with $H$) gives us that 
\[
L\times L^\perp= (\para N_C(\oz))^{\perp\perp}\times (\para N_C(\oz))^{\perp}\subset N_{{\rm gph}\, \partial g^*}(\oz,\ox),
\]
which is exactly \eqref{eq:SCD1}. Hence, we also have \eqref{eq:LinN} by interchanging the roles of $L$ and $L^\perp$ from $N_{{\rm gph}\, \partial g^*}(\oz,\ox)$ to $N_{{\rm gph}\, \partial g}(\ox,\oz)$. The inclusion \eqref{eq:SCD} is derived similarly by replacing formula \eqref{eq:Normal} with \eqref{eq:SCD0}. 
\end{proof}

\begin{Corollary}\label{coro:compare} Consider problem \eqref{p:CP} in which $g$ is either the indicator function or the support function of a polyhedral set in $\XX$. 
{\bf Algorithm~\ref{algo:poly-ista}} is a special case of {\bf Algorithm~\ref{algo:NewCod}} and {\rm \cite[Algorithm~4.2]{Gfrerer25}}.
\end{Corollary}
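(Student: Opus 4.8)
The key to the whole statement is that all three schemes share the \emph{same} approximation step: the pair $(y_k,z_k)\in\gph\partial g$ produced by the ISTA/prox choice of {\bf Algorithm~\ref{algo:poly-ista}} satisfies the boundedness condition~\eqref{eq:bound-cond0}, as already verified in the proof of Corollary~\ref{coro:PGN}, and this is exactly the approximation step required by both {\bf Algorithm~\ref{algo:NewCod}} and \cite[Algorithm~4.2]{Gfrerer25}. Hence the plan is to fix the running pair $(y_k,z_k)$ and show only that the Newton direction $d_k$ of~\eqref{p:Sub} is an admissible Newton direction for each of the two target methods. Throughout I will apply Proposition~\ref{prop:compare} at $(y_k,z_k)$ with $L_k=\para\partial g^*(z_k)$ in place of $(\ox,\oz)$ and $L$; this is legitimate because the proposition is stated, and proved, for an arbitrary point of $\gph\partial g$.

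First I would handle the coderivative-based scheme. The optimal solution $d_k$ of the quadratic problem~\eqref{p:Sub} is characterized by its optimality condition~\eqref{eq:Incl}, namely $d_k\in L_k$ together with $r_k:=-\nabla^2 f(y_k)d_k+z_k+\nabla f(y_k)\in L_k^\perp$. Consequently $(r_k,-d_k)\in L_k^\perp\times L_k$, and inclusion~\eqref{eq:LinN} of Proposition~\ref{prop:compare} gives $L_k^\perp\times L_k\subset N_{\gph\partial g}(y_k,z_k)$, whence $(r_k,-d_k)\in N_{\gph\partial g}(y_k,z_k)$. By the definitions~\eqref{def:limit-code}--\eqref{def:secondsub} of the limiting coderivative and the generalized Hessian, this says precisely $r_k\in\partial^2 g(y_k\,|\,z_k)(d_k)$, which is the coderivative Newton inclusion~\eqref{eq:NewStep}. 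Thus $d_k$ is a valid Newton direction for {\bf Algorithm~\ref{algo:NewCod}}, and {\bf Algorithm~\ref{algo:poly-ista}} is a special case of it.

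For the SCD scheme the starting point is inclusion~\eqref{eq:SCD}, which certifies that $L_k\times L_k^\perp$ is an element of the SC derivative $\mathcal{S}\partial g(y_k,z_k)$ and therefore an admissible choice in the Newton step of \cite[Algorithm~4.2]{Gfrerer25}. The plan is to recall that this step linearizes the generalized equation $0\in\nabla f(x)+\partial g(x)$ at $(y_k,z_k)$ by seeking $(x_{k+1},v)$ with $\nabla f(y_k)+\nabla^2 f(y_k)(x_{k+1}-y_k)+v=0$ and $(x_{k+1}-y_k,\,v-z_k)$ lying in the chosen subspace, and then to substitute the subspace $L_k\times L_k^\perp$. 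Writing $d_k=y_k-x_{k+1}$, the membership $(x_{k+1}-y_k,v-z_k)\in L_k\times L_k^\perp$ yields $d_k\in L_k$ and $v-z_k\in L_k^\perp$; eliminating $v=\nabla^2 f(y_k)d_k-\nabla f(y_k)$ from the linearized equation turns $v-z_k\in L_k^\perp$ into $r_k\in L_k^\perp$, so the step collapses exactly onto~\eqref{eq:Incl}. The main obstacle is the bookkeeping of faithfully matching the precise template of \cite[Algorithm~4.2]{Gfrerer25}---in particular the convention by which an $n$-dimensional subspace of $\XX\times\XX$ encodes the linear system, and confirming that $L_k\times L_k^\perp$ meets the admissibility (self-adjointness) requirement that makes the SCD step well posed---after which the algebraic reduction to~\eqref{eq:Incl} is routine.
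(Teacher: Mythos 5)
Your proposal is correct and follows essentially the same route as the paper: both parts hinge on applying Proposition~\ref{prop:compare} at the current pair $(y_k,z_k)$, using the optimality system~\eqref{eq:Incl} together with inclusion~\eqref{eq:LinN} to recover the coderivative Newton inclusion~\eqref{eq:NewStep}, and invoking~\eqref{eq:SCD} to certify $L_k\times L_k^\perp$ as an admissible subspace for the SCD step. The only difference is that you unpack the SCD reduction to~\eqref{eq:Incl} in more detail than the paper, which simply cites~\eqref{eq:SCD} and stops there.
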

\begin{proof}
Proceeding as in {\bf Algorithm~\ref{algo:poly-ista}}, at each iteration, $d_k$ solves the following linear system:  
\begin{equation*}
    -\nabla^2 f(y_k) d_k + z_k + \nabla f(y_k) \in L_k^\perp \quad \text{and} \quad d_k \in L_k.
\end{equation*}
On the one hand, by using \eqref{eq:LinN}, we get 
\begin{equation*}
    \Big( -\nabla^2 f(y_k) d_k + z_k + \nabla f(y_k),-d_k\Big) \in L_k^\perp \times L_k \subset N_{{\rm gph}\, \partial g}(y_k,z_k),
\end{equation*}
which thus implies by \eqref{def:limit-code} and \eqref{def:secondsub} that $
    -\nabla^2 f(y_k)d_k + z_k + \nabla f(y_k)  \in \partial^2 g (y_k|\,z_k)(d_k).$
This shows that our Newton step $d_k$ satisfies the coderivative inclusion \eqref{eq:NewStep} from {\bf Algorithm~\ref{algo:NewCod}}.

On the other hand, employing \eqref{eq:SCD} also indicates that our $-d_k$ can be chosen as a Newton direction in \cite[Algorithm~4.2]{Gfrerer25}. 
\end{proof}

\begin{Remark}\rm 
Corollary \ref{coro:compare} shows that when $g$ is either the indicator or the support function of a convex polyhedral set in $\XX$, our Newton scheme provides an efficient way of choosing Newton directions that are special instances of the coderivative-based \cite{KhanhMordukhovichPhat25} and the SCD Newton method \cite[Algorithm~4.2]{Gfrerer25} while avoiding computing any second-order structures of the regularizer $g$. In contrast to these approaches, which, under their standing assumptions, guarantee at most local superlinear convergence, our method uses only first-order information from the conjugate $g^*$ yet still attains quadratic convergence. The trade-off is that our current theory applies only to convex piecewise linear/polyhedral functions $g$. \endproof
\end{Remark} 
It is worth noticing that, in practice, the proximal gradient (ISTA) step may produce only small improvements in the early iterations. To make the algorithm faster, an accelerated version known as Fast Proximal Gradient Method (a.k.a. {\em Fast Iterative Shrinkage-Thresholding Algorithm} (FISTA)) \cite{BeckTeboulle09,ChambolleDossal15} is usually applied. In practice, since we do not know how to choose $x_0$ close to $\ox$ as required in Theorem~\ref{theo:quad-poly}, using FISTA may move the iterates quickly to a neighborhood of $\ox$ and then allow us to start the Newton step, which exhibits quadratic convergence. We modify {\bf Algorithm~\ref{algo:poly-ista}} using FISTA below.

\begin{algorithm}[H]
\caption{FISTA-Newton method with effective subspaces}
\textbf{Input}: $x_0 = x_{-1}\in \mathbb{X}, \alpha >0, \{\beta_k\} \subset (0,1]$.
\begin{algorithmic}[1]\label{algo:poly-fista}
  %\scriptsize
  \STATE Compute $u_k := x_k + \beta_k(x_k-x_{k-1})$.
  \STATE Compute $y_k := \prox_{\alpha g}(u_k-\alpha\nabla f(u_k))$.
  \STATE Compute $z_k := \frac{u_k-y_k}{\alpha} - \nabla f(u_k)$.
  \STATE Compute $L_k:={\rm par}\, \partial g^*(z_k)$ and find $d_k$ solving problem~\eqref{p:Sub}.
  \STATE Update $x_{k+1}=y_k-d_k$. 
\end{algorithmic}
\end{algorithm}

Here, $\{\beta_k\}$ is a bounded below sequence in $(0,1]$; see \cite{BeckTeboulle09,ChambolleDossal15,LiangLuoTao22} for different ways of updating this sequence. We remark that the practice of combining FISTA with a second-order method for the $\ell_1$ logistic regression problem appeared in \cite[Page~99-100]{wylie2019}, where promising numerical performance was reported. However, the convergence analysis of this combined method is not provided there. In the following corollary, we establish a definitive convergence result for FISTA when employed within our effective subspace framework.

Unlike the scheme with ISTA-Newton update for $(y_k,z_k)$ that satisfies \eqref{eq:bound-cond0} immediately, it is not clear to us if the choice of $(y_k,z_k)$ in {\bf Algorithm~\ref{algo:poly-fista}} also fulfills this condition because of the extrapolation step above, which involves both $x_k$ and $x_{k-1}$. However, it is similar to \eqref{eq:bound-cond0} proved in Corollary~\ref{coro:PGN} that 
\begin{equation}\label{eq:fis-bound}
\begin{aligned}
\|y_k-\bar x\|+\|z_k-\bar z\|&\le \eta\|u_k-\ox\|\\
&=\eta\|(1+\beta_k)(x_k-\ox)-\beta_k(x_{k-1}-\ox)\|\\
&\le 2\eta\max\{\|x_k-\ox\|;\,\|x_{k-1}-\ox\|\}
\end{aligned}
\end{equation}
with $\eta>0$ in  \eqref{eq:eta}. In order to prove the local quadratic convergence of this algorithm, we have to adjust our proof of Theorem~\ref{theo:quad-poly} with the bound in \eqref{eq:fis-bound} above.

\begin{Corollary}[Quadratic convergence-type of the FISTA-Newton method with effective subspaces] 
Under the assumptions of Theorem \ref{theo:quad-poly}, there exist $\varepsilon>0 $ and $\widetilde{C}>0$ (independent of $\ve$) such that for each starting point $x_0 \in \mathbb{B}_{\varepsilon}(\ox)$, {\bf Algorithm~\ref{algo:poly-fista}} produces a sequence $\{p_k\}$ defined by 
\begin{equation}\label{eq:pk}
p_k:=\max\{\|x_k-\ox\|;\,\|x_{k-1}-\ox\|\}\quad \mbox{for}\quad k=0,1,\ldots, 
\end{equation}
satisfying
\begin{enumerate}
\item[{\rm {\bf (i)}}] $p_0,p_1 \le \ve$.
\item[{\rm {\bf (ii)}}]  $p_{k+2}\le \widetilde{C} p_k^2\,$ for all $k\in\N\cup \{0\}$.
\end{enumerate}
\end{Corollary}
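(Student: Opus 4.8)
The plan is to rerun the induction from the proof of Theorem~\ref{theo:quad-poly}, but with the two-step quantity $p_k$ from \eqref{eq:pk} playing the role of $\|x_k-\ox\|$, since the extrapolation $u_k=x_k+\beta_k(x_k-x_{k-1})$ makes the approximation step depend on the two previous iterates. First I would fix the constants $\kappa,r,\ell$ from \eqref{eq:e-2}, take $\eta$ as in \eqref{eq:eta}, set $\widetilde C:=\dfrac{2\ell\eta^2}{\kappa}$, and shrink $\ve$ so that
\[
\ve<\min\Big\{\tfrac{r}{4(1+\ell)\eta};\ \tfrac{\kappa}{2\ell\eta^2}\Big\},
\qquad\text{in particular } \widetilde C\,\ve\le 1 .
\]
Exactly as in Corollary~\ref{coro:PGN}, the resolvent identity \eqref{eq:res} gives $z_k\in\partial g(y_k)$, and combining the nonexpansiveness of $\prox_{\alpha g}$ with the Lipschitz bound on $\nabla f$ produces the estimate \eqref{eq:fis-bound} (the choice of $\ve$ keeps $u_k$ inside $\B_r(\ox)$, so the latter bound applies); from \eqref{eq:fis-bound} I read off $\|y_k-\ox\|\le 2\eta\,p_k$ and $\|z_k-\oz\|\le 2\eta\,p_k$.

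The heart of the argument is a one-step estimate valid whenever $p_k\le\ve$. Under this hypothesis the choice of $\ve$ forces $\|y_k-\ox\|\le 2\eta\ve<r/2$ and, with $v_k:=\nabla f(y_k)+z_k$, $\|v_k\|\le 2(1+\ell)\eta\ve<r/2$, so $(y_k,v_k)\in\gph\partial\varphi\cap\B_r(\ox,0)$. Hence the tilt-stability inequality \eqref{eq:Noz} holds on $L_k$, $\nabla^2f(y_k)$ is positive definite there, and $d_k$ is the unique minimizer of \eqref{p:Sub}. From this point the computation of Theorem~\ref{theo:quad-poly} carries over word for word: $z_k\in\partial g(y_k)\subset\partial g(\ox)$ yields $x_{k+1}-\ox\in L_k$, the Reduction Lemma yields $P_{L_k}(z_k-\oz)=0$, and the integral remainder together with the lower bound \eqref{eq:2-di-2} give
\[
\|x_{k+1}-\ox\|\le\frac{\ell}{2\kappa}\|y_k-\ox\|^2\le\frac{\ell}{2\kappa}(2\eta\,p_k)^2=\widetilde C\,p_k^2 .
\]

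With this estimate in hand the corollary follows by induction on $k$. For (i), the initialization $x_0=x_{-1}$ gives $u_0=x_0$, hence $p_0=\|x_0-\ox\|\le\ve$, and then $\|x_1-\ox\|\le\widetilde C p_0^2\le(\widetilde C\ve)\ve\le\ve$, so $p_1\le\ve$. In general, assuming $p_k\le\ve$, the one-step estimate and $\widetilde C\ve\le1$ give $\|x_{k+1}-\ox\|\le\widetilde C p_k^2\le p_k$, whence $p_{k+1}=\max\{\|x_{k+1}-\ox\|;\|x_k-\ox\|\}\le p_k\le\ve$; this keeps every iterate inside $\B_\ve(\ox)$ and records the monotonicity $p_{k+1}\le p_k$. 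For (ii) I then combine the one-step estimate at indices $k$ and $k+1$ with this monotonicity:
\[
p_{k+2}=\max\{\|x_{k+2}-\ox\|;\|x_{k+1}-\ox\|\}\le\max\{\widetilde C p_{k+1}^2;\,\widetilde C p_k^2\}=\widetilde C\,p_k^2 .
\]

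The only genuinely new ingredient beyond Theorem~\ref{theo:quad-poly} is the bookkeeping forced by the extrapolation: because $u_k$ couples $x_k$ and $x_{k-1}$, the correct quantity to track is the pairwise maximum $p_k$, and the per-iteration contraction $\|x_{k+1}-\ox\|\le\widetilde C p_k^2$ must be promoted to the two-step bound $p_{k+2}\le\widetilde C p_k^2$. The monotonicity $p_{k+1}\le p_k$, guaranteed by the normalization $\widetilde C\ve\le1$, is exactly what enables this promotion and simultaneously confines the iterates to the neighborhood on which the tilt-stability machinery is valid; I expect this to be the only real obstacle, the remainder being a transcription of the earlier proof.
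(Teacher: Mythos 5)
Your proposal is correct and follows essentially the same route as the paper: fix the constants from \eqref{eq:e-2} and \eqref{eq:eta}, use \eqref{eq:fis-bound} to get $\|y_k-\ox\|\le 2\eta p_k$, rerun the inductive one-step estimate of Theorem~\ref{theo:quad-poly} to obtain $\|x_{k+1}-\ox\|\le\widetilde C p_k^2$, and promote it to the two-step bound. The only (harmless) difference is that you deduce $p_{k+2}\le\widetilde C p_k^2$ from the monotonicity $p_{k+1}\le p_k$, whereas the paper expands $\widetilde C p_{k+1}^2$ in terms of fourth powers and uses $\widetilde C^2\ve^2<1$; the two normalizations are equivalent.
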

\begin{proof} The proof is slightly modified from the one for Theorem~\ref{theo:quad-poly}. Let us recall all the constants from \eqref{eq:e-2} and define $\widetilde{C}:=\dfrac{2\ell\eta^2}{\kappa}$, with $\eta>0$ taken from \eqref{eq:eta}. Then choose $\varepsilon>0$ 
satisfying 
\begin{equation}\label{eq:e-fista}
    0<\ve < \min \left\{1;\,\dfrac{1}{\widetilde{C}};\, \dfrac{r}{4\eta}\right\}.
\end{equation}

 By induction, suppose that  $p_k = \max\big\{\|x_k-\ox\|;\,\|x_{k-1}-\ox\|\big\}\le \ve$ for some $k\in \N\cup \{0\}$, e.g., $p_0\le \ve$. We claim that \eqref{p:Sub} has a unique optimal solution $d_k$ and there hold the estimates $p_{k+1}\le \ve$ and $p_{k+2}\le \widetilde{C} p_k^2$. 
By \eqref{eq:fis-bound}, we have 
\begin{equation*}
\|y_k-\bar x\|+\|z_k-\bar z\|\le 2\eta p_k\le 2\eta\ve\le \frac{r}{2},
\end{equation*}
where $\eta$ is taken from \eqref{eq:eta}.
Similarly to the proof of Theorem \ref{theo:quad-poly} after \eqref{eq:y0}, the Newton step~\eqref{p:Sub} also admits a unique solution $d_k$ and 
\begin{equation}\label{eq:k+1}
    \|x_{k+1}-\ox\| \le \dfrac{\ell}{2\kappa}\|y_k-\ox\|^2 \le \widetilde{C}p_k^2 \le \widetilde{C}\ve^2 < \ve, 
\end{equation}
which yields  $p_{k+1}\le \ve$, as $\|x_k-\ox\|\le p_k\le \ve$.
The upper bound estimate $p_{k+1}\le\ve$ is enough to guarantee $(y_{k+1},z_{k+1})\in \mathbb{B}_{\frac{r}{2}}(\ox,\oz)$ due to \eqref{eq:fis-bound}, which again allows us to obtain
\begin{equation}\label{eq:k+2}
    \|x_{k+2}-\ox\|\le \widetilde{C}p_{k+1}^2.
\end{equation}
Combining \eqref{eq:k+1} and \eqref{eq:k+2} gives us that 
\begin{equation*}
\begin{aligned}
    \|x_{k+2}-\ox\| \le \widetilde{C}\max\big\{\|x_{k+1}-\ox\|^2;\, \|x_k-\ox\|^2\big\} &\le \widetilde{C}\max\Big\{\widetilde{C}^2 \|x_k-\ox\|^4;\, \widetilde{C}^2 \|x_{k-1}-\ox\|^4;\, \|x_k-\ox\|^2\Big\}\\
    &\le \widetilde{C}\max\big\{\|x_k-\ox\|^2;\,\|x_{k-1}-\ox\|^2\big\} = \widetilde{C}p_k^2,
\end{aligned}
\end{equation*}
where the last inequality is due to $\widetilde{C}^2 p_k^2 =\widetilde{C}^2\max\{\|x_k-\ox\|^2;\,\|x_{k-1}-\ox\|^2\}\le \widetilde{C}^2 \ve^2 <1$. We derive from the above inequality and \eqref{eq:k+1} that 
\begin{equation*}
    p_{k+2}\le \widetilde{C}p_k^2
\end{equation*}
as claimed. This completes the proof of the corollary.
\end{proof}

\begin{Remark}[Efficient extrapolation parameters in {\bf Algorithm~\ref{algo:poly-fista}}]
\rm
To the best of our knowledge, convergence of the iterates generated by the original FISTA in \cite{BeckTeboulle09} has not been established. However, to accelerate convergence while ensuring that the iterates approach $\ox$, we must guarantee that the sequence itself converges. This motivates the use of extrapolation schemes for which convergence of the iterates can be rigorously proved.

Several choices of the extrapolation parameters $\{\beta_k\}$ ensuring convergence of the sequence have been studied in the literature \cite{ChambolleDossal15,LiangLuoTao22}, without assuming that $\ox$ is a tilt-stable minimizer. In particular, \cite{LiangLuoTao22} proved the convergence of FISTA iterates under the parameterization
\begin{equation}\label{1stchoice}
\beta_k = \frac{t_{k-1}-1}{t_k}, 
\qquad 
t_k = \frac{p + \sqrt{\,q + 4 t_{k-1}^2\,}}{2},
\end{equation}
where $p\in (0,1)$, $q\in [p^2,(2-p)^2]$ are tunable parameters controlling the growth of $\{\beta_k\}$. 

An alternative choice was proposed in the Chambolle--Dossal variant of FISTA \cite{ChambolleDossal15}, where
\begin{equation}\label{2ndchoice}
\beta_k = \frac{k-1}{k+d},
\qquad 
t_k = \frac{k+d}{d}, \quad d>2.
\end{equation}
This rule preserves the $O(1/k^2)$ convergence rate in terms of function values while guaranteeing convergence of the iterates. In practice, both parameterizations yield trajectories that are smoother and more stable than those of the original FISTA, which often exhibits oscillatory behavior. Consequently, these variants typically achieve comparable or improved overall performance, especially on ill-conditioned problems.

%In the numerical section, we illustrate the performance of the parameterization \eqref{1stchoice} within our proposed {\bf Algorithm~\ref{algo:poly-fista}} on several machine learning and statistical problems.
\end{Remark}

\section{Effective subspaces of several polyhedral regularizers}\label{sec:exam}
\setcounter{equation}{0}

%Let us recall some important classes of problems that arise in machine learning and statistics of the type of problem \eqref{p:CP} with $g(x)$ being a polyhedral function.
%\begin{enumerate}

%\item  $\ell_1$-regularized least square (LASSO): $f(x) = \dfrac{1}{2}\|Ax-b\|^2$ and $g(x) = \|x\|_1$

%\item  Sparse logistic regression: $f(x) = \ln( 1 +  \exp(-y_i(\langle w,x_i\rangle + \beta))$ and $g(x) = \|x\|_1$

%\item Generalized Lasso: $f(x) = \dfrac{1}{2}\|Ax-b\|^2$ and $g(x) = \sum\limits_{i = 1}^{n-1} |x_{i} - x_{i+1}|$ 

%\item Octagonal Shrinkage and Clustering Algorithm for Regression (OSCAR \cite{Bondell08}): $f(x) = \dfrac{1}{2}\|Ax-b\|^2$ and $g(x) = \|x\|_1 + \sum\limits_{i < j}\max\{|x_i|,|x_j|\}$
 
%\end{enumerate}

In our {\bf Algorithm~\ref{algo:poly0}}, the appearance of the effective subspace $L_k= \mathrm{par}\,\partial g^* (z_k)$ plays a crucial role. In this section, we illustrate concretely how these subspaces are computed in some specific polyhedral convex regularizers $g(x)$ broadly used in optimization and machine learning problems. These computations will be employed later in Section~\ref{sec:numerical}, when we experiment the performance of our {\bf Algorithms~\ref{algo:poly-ista} and~\ref{algo:poly-fista}} for different optimization models.

\begin{Proposition}[Support functions]\label{Prop:Supp} Suppose that $g=\sigma_C:\XX\to \oR$ is the support function of a closed convex set $C$. Then $\para \partial g^*(z)=\Span N_C(z)$ for any $z\in C$. 
\end{Proposition}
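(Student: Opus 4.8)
The plan is to reduce the statement to the conjugate identity $g^*=\delta_C$ and then invoke the elementary fact, recorded immediately after \eqref{def:Par}, that the parallel subspace of a convex cone coincides with its span. No variational-analytic machinery beyond the preliminaries should be needed.

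First I would note that, by definition, the support function is the Legendre--Fenchel conjugate of the indicator: $g=\sigma_C=\delta_C^*$. Since $C$ is closed and convex, $\delta_C$ is a proper, lower semicontinuous, convex function, so the Fenchel--Moreau biconjugate theorem yields $g^*=(\delta_C^*)^*=\delta_C$. Consequently, for any $z\in C$ the subdifferential of the conjugate satisfies $\partial g^*(z)=\partial\delta_C(z)$, and by the identification of the normal cone as the subdifferential of the indicator function recalled in Section~\ref{sec:prelim}, we obtain $\partial g^*(z)=N_C(z)$.

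It then remains only to compute the parallel subspace of $N_C(z)$. Because $N_C(z)$ is a convex cone, the formula $\para K=\Span K$ for convex cones stated right after \eqref{def:Par} applies with $K=N_C(z)$, giving $\para\partial g^*(z)=\para N_C(z)=\Span N_C(z)$, which is precisely the claim. I do not anticipate a real obstacle here: the only step requiring attention is the hypothesis that $C$ be closed and convex, which is exactly what licenses the biconjugate reduction $g^*=\delta_C$ (without lower semicontinuity of $\delta_C$ one could conclude only $g^*=\delta_{\cl\conv C}$). Everything else is a direct application of results already assembled in the excerpt.
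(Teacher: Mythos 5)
Your proof is correct and follows essentially the same route as the paper's: identify $g^*=\delta_C$ (the paper states this directly, you justify it via Fenchel--Moreau), deduce $\partial g^*(z)=N_C(z)$, and apply the convex-cone identity $\para K=\Span K$. Your added remark on why closedness of $C$ is needed for the biconjugate step is a reasonable elaboration but does not change the argument.
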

\begin{proof}
As $g(x)=\sigma_C(x)$, we have $g^*=\delta_C$, the  indicator function to $C$. It follows that $\partial g^*(z)=N_C(z)$. As $N_C(z)$ is a closed convex cone in $\XX$, we have
\[
\para \partial g^*(z)=N_C(z)-N_C(z)=\Span N_C(z).
\]
The proof is complete.  
\end{proof}

Any norm in $\XX$ can be represented as the support function of its {\em dual unit ball}. Particularly, we consider the case of $\ell_1$ norm, its variant known as sorted $\ell_1$ norm, and $\ell_\infty$ norm below. 

{\bf (i) $\ell_1$ norm.} Let $g(x):=\sum_{i=1}^n |x_i|$  for $x\in \R^n$. We have
\begin{equation}\label{eq:g*delta}
    g^* =(\|\cdot\|_{1})^* = (\sigma_{\mathbb{B}_{\infty}})^* = \delta_{\mathbb{B}_{\infty}},
\end{equation}
where $\B_\infty=\{v\in \R^n \,|\, |v_i|\le 1,\, i=1,\ldots, n\}$ is the $\ell_\infty$ unit ball in $\R^n$. For any $z\in \B_\infty$, it is well-known that  
\[
\partial g^* (z) = N_{\mathbb{B}_{\infty}}(z) = \left\{x\in \R^n \;\Bigg|\; 
    \begin{cases}
    x_i =0 &\text{if } |z_i|<1 \\
    x_i z_i \ge 0 &\text{if }|z_i|=1
    \end{cases}
    \right\}.
\]
Hence, we have
\begin{equation}\label{eq:spanN}
    \mathrm{par}\,\partial g^* (z) =  N_{\mathbb{B}_{\infty}}(z)-N_{\mathbb{B}_{\infty}}(z)= \{x\in \R^n \mid x_i =0 \text{ if }|z_i|<1\} = \{0\}^{J(z)} \times \R^{J(z)^c}, 
\end{equation}
where $J(z):= \{i\in \{1,\ldots,n\}\mid |z_i|<1\}$.\\

{\bf (ii) $\ell_{\infty}$ norm.} When $g$ is the $\ell_{\infty}$ norm, in the manner of \eqref{eq:g*delta} we have 
\begin{equation*}
    g^* =(\|\cdot\|_{\infty})^* = (\sigma_{\mathbb{B}_{\|\cdot\|_1}})^* = \delta_{\mathbb{B}_{\|\cdot\|_1}},
\end{equation*}
where $\mathbb{B}_{\|\cdot\|_1}:=\{v\in \R^n\,|\, \|v\|_1 \le 1\}$ is the $\ell_1$ unit ball. 
It follows that  $\partial g^* (z) = N_{\mathbb{B}_{\|\cdot\|_1}}(z)$ for each $z\in \R^n$. It follows from Proposition~\ref{Prop:Supp} that 
\begin{equation}\label{eq:par-linfty}
\mathrm{par}\,\partial g^* (z) = \mathrm{span}\,(N_{\mathbb{B}_{\|\cdot\|_1}}(z)).
\end{equation}
Throughout this section, we use the {\em sign function} as follows:
\[
\sign(\al):=\begin{cases}
        1  &\text{if }\; \al >0\\
        -1  &\text{if }\; \al<0\\
        0&\text{if }\; \al=0
    \end{cases}\quad \mbox{for any}\quad \al\in \R.
\]
Moreover, define $\sign(z):=(\sign(z_i))_{1\le i\le n}$ for any $z\in \R^n.$ According to the formula of normal cone to a lower level-set \cite[Proposition~10.3]{Rockafellar98}, we have
\begin{equation}\label{eq:N-formula}
N_{\mathbb{B}_{\|\cdot\|_1}}(z)=\cone (\partial\|\cdot\|_1(z))=
     \cone\left\{ x\in \R^n \;\Bigg|\;  \begin{cases}
        x_i \in[-1,1] &\text{if }\; z_i =0\\
        x_i = \sign(z_i) &\text{if }\; z_i \neq0
    \end{cases}
    \right\}.
\end{equation}
Combining \eqref{eq:N-formula} with \eqref{eq:par-linfty} yields the inclusion 
\begin{equation}\label{eq:par-linfty2}
    \mathrm{par}\,\partial g^* (z) \subset \left\{ x\in \R^n \mid \exists \; \alpha \in \R : 
        x_i = \alpha\, \sign(z_i)\;\text{if }\;z_i \neq 0\right\}=\R^{I(z)} \times \R\left(\sign(z_i)\right)_{i\notin I(z)},
\end{equation}
where $I(z):=\{i\mid z_i=0\}$. To justify the opposite inclusion to \eqref{eq:par-linfty2}, pick $x\in \R^n$ such that there exists $\alpha \in \R$ satisfying $x_i = \alpha\, \mathrm{sign}\,(z_i)$ for each $i\notin I(z)$. With $\alpha^+ := \max \{0,\alpha\}$ and $\alpha^- := \max\{0,-\alpha\}$, let $p,q\in \R^n$ with coordinates given by
\begin{equation*}
    (p_i,q_i):= \begin{cases}
        (x_i,0) &\text{if }z_i =0,\\
        \left(\displaystyle\sum_{j\in I(z)}|x_j| + \alpha^+,\displaystyle\sum_{j\in I(z)}|x_j| + \alpha^-\right) &\text{if }z_i >0, \\
        -\left(\displaystyle\sum_{j\in I(z)}|x_j| + \alpha^+,\displaystyle\sum_{j\in I(z)}|x_j| + \alpha^-\right) &\text{if }z_i <0,
    \end{cases}\quad \text{ for }\; i=1,\ldots,n.
\end{equation*}
Observe from \eqref{eq:N-formula} that $p,q\in N_{\mathbb{B}_{\|\cdot\|_1}}(z)$ and $x=p-q$. Consequently, the opposite inclusion to \eqref{eq:par-linfty2} holds. Hence, we have
\begin{equation}\label{eq:par-linfty3}
    \mathrm{par}\,\partial g^* (z)=\R^{I(z)} \times \R\left(\sign(z_i)\right)_{i\notin I(z)}.
\end{equation}

{\bf (iii) Sorted $\ell_1$ norm.} For any vector $x\in \R^n$, denote by $|x|_{(i)}$ the $i^{\textup{th}}$ {\em largest absolute value among coordinates} of $x$, with ties distributed by an arbitrary rule. In the Sorted L-One Penalized Estimation (SLOPE) model (see, e.g., \cite{LSTX19}), the {\em sorted $\ell_1$ norm} is defined by
\begin{equation}\label{eq:sort}
    g(x):= \sum_{i=1}^n \lambda_i |x|_{(i)}\quad \mbox{for}\quad x\in \R^n,
\end{equation}
with parameters $\lambda_1 \ge \lambda_2 \ge \ldots \ge \lambda_n \ge 0$ and $\lambda_1 >0$. Note that the OSCAR model \cite{Bondell08} is a special case of SLOPE, by choosing $\lambda_i:=w_1 + w_2 (n-i)$, $i=1,\ldots,n$ with parameters $w_1,w_2\ge 0$.

From \cite[Section~4]{LSTX19}, we know that $g^*$ is the indicator function to the following convex set 
\begin{equation}\label{eq:Clambda}
    C_{\lambda}:= \left\{z\in \R^n \;\Bigg|\; \sum_{i=1}^k |z|_{(i)} \le \sum_{i=1}^k \lambda_i\, \text{ for each }\, k\in \{1,\ldots,n\}\right\}.
\end{equation}
Hence,  $\partial g^* = N_{C_{\lambda}}$. By letting $s_k(z):=\sum_{i=1}^k |z|_{(i)}$ and $\Lambda_k:= \sum_{i=1}^k \lambda_i$, we rewrite $C_\lambda$ as $$C_{\lambda}= \displaystyle\bigcap\limits_{k=1}^n \{z\in \R^n\mid s_k (z) \le \Lambda_k\},$$ 
i.e., $C_{\lambda}$ is an intersection of finitely many lower level-sets. As $\lm_1>0$, we have $\Lambda_k >0$ for any $k\in\{1,\ldots,n\}$. Thus the Slater's condition for $C_{\lambda}$ holds at the origin. It follows from the Kuhn-Tucker theorem (see, e.g., \cite[Corollary~28.3.1]{Rockafellar70}) that $N_{C_{\lambda}}(z)$ is exactly the convex conic hull of subgradients corresponding to the active constraints at $z$, i.e., one has 
\begin{equation}\label{eq:Norsk}
N_{C_{\lambda}}(z) = \cone \left(\displaystyle\bigcup_{k\in A(z)} \partial s_k (z)\right)\quad \mbox{with}\quad A(z):=\{k\in \{1,\ldots,n\}\mid s_k (z) = \Lambda_k\}.
\end{equation}
By Proposition~\ref{Prop:Supp}, $\para\partial g^*(z)= \Span N_{C_{\lambda}}(z)$. To construct this subspace, we need to compute $\partial s_k (z)$. 

\begin{Proposition}[Subdifferential of $s_k(z)$] For any $z\in \dom g^*$ with $g$ being the sorted $\ell_1$ norm defined in \eqref{eq:sort} and for any $k\in A(z)$, define the following index sets
\begin{equation*}
    T_k := \big\{i\mid |z_i| > |z|_{(k)}\},\quad E_k := \big\{i \mid |z_i| = |z|_{(k)}\big\}, \quad{\rm and}\quad  R_k := \big\{i \mid |z_i| < |z|_{(k)}\big\}.
\end{equation*}We have 
\begin{equation*}
    \partial s_k (z) =  \begin{cases}
    \left\{v\in \mathbb{B}_{\infty} \;\Bigg|\; \|v\|_1 \le k\; \text{ and}\; v_i=\mathrm{sign}\, (z_i)\; \text{ if}\; i\in T_k
     \right\} &\text{if }|z|_{(k)}=0 \\
    \left\{v\in \mathbb{B}_{\infty} \;\Bigg|\; v_i \in  \begin{cases} \{\mathrm{sign}\, (z_i)\} &\text{ if}\; i\in T_k\\
     \R_+\mathrm{sign}\, (z_i) &\text{ if}\;i\in E_k\\
    \{0\} &\text{ if}\; i\in R_k
     \end{cases}\; \text{  and }\; \displaystyle\sum_{i\in E_k} |v_i| = k-|T_k|
     \right\} &\text{if }\; |z|_{(k)}>0,
    \end{cases}
\end{equation*}
where $|T_k|$ denotes the cardinality of the set $T_k$. Consequently, it follows that
\begin{equation*}\label{eq:spansk}
   \Span \partial s_k (z)=\begin{cases}\Span\left\{v_k, \big(e_j\big)_{j\in E_k}\right\} &\text{if }|z|_{(k)}=0\\ \Span\left\{v_k, \Bigg(e_j-\frac{\sign(z_{j})}{\sign(z_{m_k})}e_{m_k}\Bigg)_{j\in E_k\setminus\{m_k\}}\right\} &\text{if }|z|_{(k)}>0,
   \end{cases}
\end{equation*}
where $v_k$ is any subgradient in $\partial s_k (z)$ determined above, $e_j:=(0,\ldots,0, 1, 0,\ldots, 0)^T$ is the $j^{\rm th}$ unit vector in $\R^n$, and $m_k$ is the largest index in $E_k$. These formulas for  $k\in A(z)$ provide us a spanning set for the parallel space $\mathrm{par}\, \partial g^* (z)$ as follows 
\begin{equation}\label{eq:gsk}
\mathrm{par}\, \partial g^* (z) =\mathrm{span}\left(\displaystyle\bigcup_{k\in A(z)} \Span (\partial s_k (z))\right). 
\end{equation}

\end{Proposition}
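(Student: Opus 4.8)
The plan is to recognize $s_k$ as the support function of a polytope, read off its subdifferential as an exposed face, and then compute the span of that face coordinate-by-coordinate.

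First I would rewrite $s_k(z)=\sum_{i=1}^k|z|_{(i)}$, the sum of the $k$ largest absolute values, as a linear program. Maximizing $\la v,z\ra$ over all $v$ with $\|v\|_\infty\le 1$ and $\|v\|_1\le k$ forces each $v_i$ to align in sign with $z_i$ and to saturate on the largest coordinates, so that
\[
s_k(z)=\sigma_{V_k}(z),\qquad V_k:=\{v\in\R^n\mid \|v\|_\infty\le 1,\ \|v\|_1\le k\}.
\]
Since $V_k$ is a compact convex set, the subdifferential of its support function at $z$ is the exposed face $\partial s_k(z)=\{v\in V_k\mid \la v,z\ra=s_k(z)\}$, i.e.\ the set of maximizers of the above program. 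This reduces the whole computation to solving that program.

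Next I would characterize the maximizers as a fractional-knapsack solution. Writing $t_i:=|v_i|\in[0,1]$ with $\sum_i t_i\le k$ and noting that the objective equals $\sum_i t_i|z_i|$ exactly when each $v_i$ shares the sign of $z_i$ (with $v_i$ free when $z_i=0$), the optimal allocation must put $t_i=1$ on the strictly-largest block $T_k$, must vanish on the strictly-smaller block $R_k$, and distributes the leftover budget $k-|T_k|$ across the tied block $E_k$. Tracking the threshold $|z|_{(k)}$ then produces the two cases: when $|z|_{(k)}>0$ the budget is fully used, giving the sign constraints together with $\sum_{i\in E_k}|v_i|=k-|T_k|$; when $|z|_{(k)}=0$ the objective ignores the zero coordinates, so the $E_k$-entries are constrained only by $\|v\|_1\le k$, yielding the first displayed formula. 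The points needing care are that $R_k$ must vanish and that the $E_k$-budget is exhausted in \emph{every} maximizer, both of which follow because any slack could be profitably reassigned to a coordinate of value $|z|_{(k)}>0$.

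Finally I would compute $\Span\partial s_k(z)$ from its affine structure. Since the $T_k$- and $R_k$-coordinates are frozen across the face, all variation lives on $E_k$, so $\Span\partial s_k(z)=\R v_k+W$ for any fixed $v_k\in\partial s_k(z)$, where $W$ is the parallel subspace of the $E_k$-slice. When $|z|_{(k)}>0$ the slice $\{(t_i\sign(z_i))_{i\in E_k}\mid t_i\ge 0,\ \sum t_i=k-|T_k|\}$ is a sign-twisted simplex whose parallel space is the hyperplane $\{w\mid \sum_{i\in E_k}w_i\sign(z_i)=0\}$, for which the vectors $e_j-\tfrac{\sign(z_j)}{\sign(z_{m_k})}e_{m_k}$, $j\in E_k\setminus\{m_k\}$, form a basis; when $|z|_{(k)}=0$ the slice has nonempty interior in $\R^{E_k}$ (as $k-|T_k|\ge 1$), so $W=\Span\{e_j\mid j\in E_k\}$. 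Adding $\R v_k$ gives the two stated spanning sets, and \eqref{eq:gsk} is then immediate from $\partial g^*(z)=N_{C_\lambda}(z)=\cone\big(\bigcup_{k\in A(z)}\partial s_k(z)\big)$ in \eqref{eq:Norsk}, the identity $\para K=\Span K$ for a cone $K$, and $\Span(\cone B)=\Span B$. I expect the main obstacle to be the maximizer analysis of the second step: verifying both necessity and sufficiency of the constraints across the two cases, and in particular securing the vanishing of $R_k$ and the exhaustion of the $E_k$-budget uniformly over the entire face rather than merely at one optimizer.
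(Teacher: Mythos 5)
Your proposal follows essentially the same route as the paper: identify $s_k=\sigma_{V_k}$ with $V_k=\{v\mid\|v\|_\infty\le 1,\ \|v\|_1\le k\}$, read $\partial s_k(z)$ off as the set of maximizers via the equality conditions (signs frozen on $T_k$, vanishing on $R_k$, budget exhausted on $E_k$ when $|z|_{(k)}>0$), compute the parallel space of the $E_k$-slice to get the spanning sets, and pass to \eqref{eq:gsk} through \eqref{eq:Norsk} and $\para K=\Span K$. The only cosmetic difference is that your description of the $E_k$-slice drops the cap $|v_i|\le 1$; this does not affect the affine hull in the configurations the paper itself treats, so the argument matches.
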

\begin{proof}
First, let us claim that 
\begin{equation}\label{eq:s_k=max}
s_k (z)  = \max_{\|v\|_{\infty}\le 1,\; \|v\|_{1}\le k} \la v, z\ra.
\end{equation}
Indeed, observe that $s_k (z)=\sum_{i=1}^k |z|_{(i)} = \la \overline{v},z\ra$ with $\overline{v}\in \R^n$ given by 
\begin{equation*}
\overline{v}_i = \begin{cases} \mathrm{sign}\, (z_i) &\text{if } i\in T_k\\
    \alpha_i\, \mathrm{sign}\, (z_i) &\text{if }i\in E_k\\
    0 &\text{if } i\in R_k
     \end{cases}\; \mbox{ with }\; \al_i\ge 0, i\in E_k  \; \text{ and }\; \sum_{i\in E_k} \alpha_i = k-\big|T_k\big|.
\end{equation*}
As $\|\overline{v}\|_\infty \le 1$ and $\|\overline{v}\|_1 \le k$, we obviously have the inequality ``$\le$'' in \eqref{eq:s_k=max}. Regarding the opposite inequality, for each $v\in \R^n$  with $\|v\|_\infty \le 1$ and $\|v\|_1 \le k$, there hold the estimates
\begin{equation}\label{eq:maxvz}
\begin{aligned}
s_k (z) - \la v,z\ra  & \ge \sum_{i=1}^k |z|_{(i)} - \sum_{i=1}^n |v_i| |z_i| \\
&=\sum_{i\in T_k}(1-|v_i|)|z_i| + \left(k-|T_k|-\sum_{i\in E_k}|v_i|\right) |z|_{(k)} - \sum_{i\in R_k}|v_i||z_i|  \\
%&= \sum_{i\in T_k}(1-|v_i|)\big(|z_i|-|z|_{(k)}\big) + \sum_{i=1}^n(1-|v_i|)|z|_{(k)}\\
%&+\left(k- \big|\{i\mid |z_i|>|z|_{(k)}\}\big|-\sum_{i\,:\,|z_i|=|z|_{(k)}}|v_i|\right) |z|_{(k)} -\sum_{i\,:\, |z_i|< |z|_{(k)}}|v_i| |z|_{(k)} + \sum_{i\,:\, |z_i|<|z|_{(k)}}|v_i|\big(|z|_{(k)}-|z_i|\big) \\
&= \sum_{i\in T_k}(1-|v_i|)\big(|z_i|-|z|_{(k)}\big) + \left(k-\sum_{i=1}^n |v_i|\right)|z|_{(k)} + \sum_{i\in R_k}|v_i|\big(|z|_{(k)}-|z_i|\big)\\
&\ge 0,
\end{aligned}
\end{equation}
which clearly ensure the inequality ``$\ge$'' in \eqref{eq:s_k=max}. 
Hence, formula \eqref{eq:s_k=max} is verified. 

By following the subdifferential formula of support function in  \cite[Corollary~23.5.3]{Rockafellar70}, we obtain  
\begin{equation}\label{eq:par=argmax}
\partial s_k (z) = \mathrm{argmax}\, \{\la v,z\ra\mid {\|v\|_{\infty} \le 1,\; \|v\|_1 \le k}\}.
\end{equation}
Any optimal vector $v^*\in \partial s_k (z)$ satisfies $s_k (z) - \la v^*,z\ra=0$. This together with the estimates from \eqref{eq:maxvz} yields 
\begin{equation}\label{eq:require-v*}
     v_i^* z_i \ge 0 \,\, \forall i,\,\, |v_i^*| = \begin{cases} 1 &\text{if } i\in T_k\\
     0 &\text{if }i\in R_k
     \end{cases}\,\,  \text{and}\,\,(k-\|v^*\|_1) |z|_{(k)} =0.
\end{equation}
If $|z|_{(k)}=0$, then $R_k = \varnothing$. It follows from \eqref{eq:require-v*} that 
\begin{equation}\label{eq:vstar}
     v^* \in \left\{v\in \mathbb{B}_{\infty} \;\Bigg|\; \|v\|_1 \le k \text{ and } v_i=\mathrm{sign}\, (z_i) \text{ if } i\in T_k
     \right\}.
\end{equation}
If $|z|_{(k)}>0$, then $\|v^*\|_1=k$. The requirements from \eqref{eq:require-v*} tell us that
\begin{equation}\label{eq:vstar2}
    v^* \in \left\{v\in \mathbb{B}_{\infty} \;\Bigg|\; v_i \in  \begin{cases} \{\mathrm{sign}\, (z_i)\} &\text{if } i\in T_k\\
    \R_+\mathrm{sign}\, (z_i) &\text{if }i\in E_k\\
    \{0\} &\text{if } i\in R_k
     \end{cases} \text{and} \sum_{i\in E_k} |v_i| = k-|T_k|
     \right\}.
\end{equation} 
This together with \eqref{eq:vstar} verifies our first formula of the proposition. 

To continue, observe from \eqref{eq:Norsk}
 and the fact that $\para \partial g^*(z)=\Span N_{C_\lm}(z)$ from Proposition~\ref{Prop:Supp} that 
\begin{equation*}
    \mathrm{par}\, \partial g^* (z) = \mathrm{span}\left(\displaystyle\bigcup_{k\in A(z)} \partial s_k (z)\right) = \mathrm{span}\left(\displaystyle\bigcup_{k\in A(z)} \mathrm{span}\,\partial s_k (z)\right),
\end{equation*}
which verifies \eqref{eq:gsk}. Denote by $\mathcal{S}_k:=\para \partial s_k (z)=\aff(\partial s_k(z)-v_k)$ for any $v_k \in \partial s_k (z)$, $k\in A(z)$. 
Let us construct the subspace $\mathrm{span}\, \big(v_k, \mathcal{S}_k\big)$ based on the two cases above:

\noindent If  $|z|_{(k)}=0$, we obtain from \eqref{eq:vstar} that  $\mathcal{S}_k =  \Big\{w \,\Big|\, w_{T_k}=0\Big\}$, which clearly implies that  
\[
\Span \partial s_k(z)=\Span\{v_k,\mathcal{S}_k\}=\Span\left\{v_k, \big(e_j\big)_{j\in E_k}\right\}. 
\]

\noindent If  $|z|_{(k)}>0$, observe from \eqref{eq:vstar2} that 
\begin{equation*}
\begin{aligned}
    \aff \partial s_k (z) &= \Big\{w \,\Big|\, w_{T_k}=\big(\mathrm{sign}\,(z)\big)_{T_k},\, w_{R_k}=0,\, \big\la w_{E_k}, \big(\mathrm{sign}\, (z)\big)_{E_k}\big\ra = k - |T_k| \Big\}, \text{ and thus }\\
    \mathcal{S}_k &= \Big\{w \,\Big|\, w_{T_k}=0,\, w_{R_k}=0,\, \big\la w_{E_k}, \big(\mathrm{sign}\, (z)\big)_{E_k}\big\ra = 0 \Big\}.
\end{aligned}
\end{equation*}
Define $m_k$ as the largest index in $E_k$. It follows that 
\[
\Span \partial s_k(z)=\Span\{v_k,\mathcal{S}_k\}=\Span\left\{v_k, \Bigg(e_j-\frac{\sign(z_{j})}{\sign(z_{m_k})}e_{m_k}\Bigg)_{j\in E_k\setminus\{m_k\}}\right\}.
\]

The formula of $\Span \partial s_k(z)$ in the above two cases verifies the second formula of the proposition. The proof is complete. 
\end{proof} 

Next, we provide an explicit computation for the effective subspace $L=\para\partial g^*(z)$, when $g$ is a composite convex function, which appears widely in many convex optimization models.

\begin{Proposition}[Composite functions] Let $K:\XX\to \YY$ be a linear operator between two Euclidean spaces and $h:\YY\to \oR$ be an l.s.c. polyhedral convex function with $K^{-1}(\dom h)\neq \emptyset$. Define the composite function $g(x)=h(Kx)$ for $x\in \XX$. Then for any $z\in \dom g^*$,  we have 
\begin{equation}\label{eq:par-com}
    \para \partial g^*(z)=K^{-1}\big(\para (\partial h^*(y)\cap \Im K)\big),
\end{equation}
for any  $y\in \YY$ satisfying 
\begin{equation}\label{eq:Sol}
    K^*y=z\quad \mbox{and}\quad g^*(z)=h^*(y). 
\end{equation}

\end{Proposition}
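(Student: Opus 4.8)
The plan is to reduce the claim to a purely linear-algebraic statement about preimages of convex sets, after first computing $\partial g^*(z)$ explicitly in terms of $\partial h^*(y)$. First I would establish the pointwise formula
\begin{equation*}
\partial g^*(z) = K^{-1}\big(\partial h^*(y)\big) = \{x \in \XX \mid Kx \in \partial h^*(y)\}
\end{equation*}
for any $y$ satisfying \eqref{eq:Sol}. This follows directly from the Fenchel--Young identity \eqref{def:fenyou} and needs no calculus rule. For ``$\supseteq$'', if $Kx \in \partial h^*(y)$ then $h^*(y) + h(Kx) = \langle y, Kx\rangle = \langle K^*y, x\rangle = \langle z, x\rangle$; using $g(x) = h(Kx)$ and $g^*(z) = h^*(y)$ from \eqref{eq:Sol}, this becomes $g^*(z) + g(x) = \langle z, x\rangle$, i.e. $x \in \partial g^*(z)$. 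The reverse inclusion runs the same chain backwards: $x \in \partial g^*(z)$ gives $h^*(y) + h(Kx) = \langle y, Kx\rangle$, and the Fenchel--Young equality applied to $h^*$ at $y$ yields $Kx \in \partial h^*(y)$. Here I use that $h$ is proper lsc convex, so $h^{**} = h$, and the hypothesis $K^{-1}(\dom h)\neq\emptyset$ together with polyhedrality to guarantee that such a $y$ exists.

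Next I would record the trivial but essential identity $K^{-1}(\partial h^*(y)) = K^{-1}(\partial h^*(y) \cap \Im K)$, since any $x$ with $Kx \in \partial h^*(y)$ automatically has $Kx \in \Im K$. Writing $S := \partial h^*(y) \cap \Im K$ --- a convex subset of the subspace $\Im K$, nonempty because polyhedral convex functions are subdifferentiable throughout their domain --- it then remains to prove the general lemma
\begin{equation*}
\para\, K^{-1}(S) = K^{-1}(\para S) \qquad \text{whenever } S \subseteq \Im K \text{ is convex and } K^{-1}(S) \neq \emptyset.
\end{equation*}
Fixing $x_0 \in K^{-1}(S)$ and $s_0 := Kx_0 \in S$, and setting $T := S - s_0$ (convex, containing $0$, still inside $\Im K$), I would use $K^{-1}(S) - x_0 = K^{-1}(T)$ together with $\para S = \Span T$ to reduce the lemma to $\Span K^{-1}(T) = K^{-1}(\Span T)$.

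The inclusion $\Span K^{-1}(T) \subseteq K^{-1}(\Span T)$ is immediate, because $K^{-1}(\Span T)$ is a subspace containing $K^{-1}(T)$. The reverse inclusion is the heart of the argument and is exactly where the intersection with $\Im K$ is indispensable. Given $x$ with $Kx \in \Span T$, write $Kx = \sum_i \lambda_i t_i$ with $t_i \in T$; because $T \subseteq \Im K$, each $t_i = Kx_i$ with $x_i \in K^{-1}(T)$, so $x' := \sum_i \lambda_i x_i \in \Span K^{-1}(T)$ satisfies $Kx' = Kx$. Since $K0 = 0 \in T$ forces $\Ker K \subseteq K^{-1}(T) \subseteq \Span K^{-1}(T)$, the difference $x - x' \in \Ker K$ also lies in $\Span K^{-1}(T)$, whence $x \in \Span K^{-1}(T)$. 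Combining the two inclusions proves the lemma, and substituting $S = \partial h^*(y) \cap \Im K$ back gives \eqref{eq:par-com}. The main obstacle is precisely this reverse inclusion: had one worked with $\partial h^*(y)$ instead of its trace on $\Im K$, the representatives $x_i$ of the generators $t_i$ need not exist, and $K^{-1}\!\big(\para\,\partial h^*(y)\big)$ would in general be strictly larger than $\para\,\partial g^*(z)$. Thus the role of the intersection $\cap\,\Im K$ in the statement must be tracked carefully, as it is exactly what makes the formula tight.
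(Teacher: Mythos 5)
Your proposal is correct and follows essentially the same route as the paper: the identity $\partial g^*(z)=K^{-1}\big(\partial h^*(y)\cap \Im K\big)$ via the Fenchel--Young chain of equivalences, followed by pulling the span back through $K^{-1}$ using that the generators lie in $\Im K$ (so they admit preimages in $\partial g^*(z)$) and that $\Ker K$ is absorbed because $0$ belongs to the translated set. The only difference is presentational: you isolate the second step as a standalone lemma $\para K^{-1}(S)=K^{-1}(\para S)$ for convex $S\subseteq\Im K$ after translating to the origin, whereas the paper argues directly with the difference sets $A=\partial g^*(z)-\partial g^*(z)$ and $B=(\partial h^*(y)\cap\Im K)-(\partial h^*(y)\cap\Im K)$.
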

\begin{proof} Since $h$ is a polyhedral convex function, so is $h^*$. Thus the Fenchel-Rockafellar conjugate formula \cite[Theorem~16.3]{Rockafellar70} gives us that  
\begin{equation}\label{eq:FR}
    g^* (z) = \inf \left\{h^*(y)\mid K^*y=z\right\} \quad \forall y\in \R^n.
\end{equation}
Pick any $y$ satisfying \eqref{eq:Sol}. Note that 
Fenchel-Young's identity \eqref{def:fenyou} gives rise to the following equivalences
\begin{equation}\label{eq:EquiEx}
\begin{aligned}
    x \in \partial g^* (z) &\Leftrightarrow  \la z,x\ra = g^*(z) + g(x) \\
    &\Leftrightarrow \la K^* y,x\ra = h^* (y) + h(Kx) \\
    &\Leftrightarrow \la y, Kx\ra = h^* (y) + h(Kx) \\
    &\Leftrightarrow Kx \in \partial h^* (y)\\ &\Leftrightarrow x\in K^{-1}\big(\partial h^* (y)\big),
\end{aligned}
\end{equation}
which imply that $\partial g^* (z)=K^{-1}\big(\partial h^* (y)\cap \Im K)$. It follows that 
\[
\partial g^* (z)-\partial g^* (z)=K^{-1}\big((\partial h^* (y)\cap \Im K)-(\partial h^* (y)\cap \Im K)\big).
\]
Define $A:=\partial g^* (z)-\partial g^* (z)$ and $B:=(\partial h^* (y)\cap \Im K)-(\partial h^* (y)\cap \Im K)\subset \Im K$. Note from the above identity that $A=K^{-1}(B)$, i.e., $K(A)=B$. We claim next that $\Span A=K^{-1}(\Span B)$. Indeed, for any $x\in \Span A$, there exist $\lm_1,\ldots,\lm_n\in \R$ and $x_1, \ldots,x_n\in A$ such that 
$x=\sum_{k=1}^n\lm_ka_k$,
which yields $Kx=\sum_{k=1}^n\lm_kKa_k\in \Span B$, as $K a_k\in B$. This tells us that $\Span A\subset K^{-1}(\Span B)$. 

To justify the opposite inclusion, pick any $x\in K^{-1}(\Span B)$, i.e., $Kx\in \Span B$. There exist $\mu_1,\ldots,\mu_m\in \R$ and $u^1_1,u^2_1,\ldots, u^1_m,u^2_m\in \XX$  such that $Ku^1_1,Ku^2_1,\ldots,Ku^1_m,Ku^2_m\in \partial h^* (y)$ and that 
\[
Kx=\sum_{k=1}^m\mu_k(Ku_k^1-Ku_k^2),
\]
which implies that 
\[
x=\sum_{k=1}^m\mu_k(u_k^1-u_k^2)+u\quad \mbox{for some}\quad u\in \Ker K. 
\]
As $0\in B$, we have $\Ker K\subset K^{-1}(B)=A$. Moreover, according to \eqref{eq:EquiEx}, $u^1_1,u^2_1,\ldots, u^1_m,u^2_m\in \partial g^*(z)$. It follows that $x\in \Span A$, i.e., $K^{-1}(\Span B)\subset \Span A$. This verifies the above claim and justifies formula \eqref{eq:par-com}.
\end{proof}

\begin{Remark}{\rm In the above proposition, we suppose that $h$ is polyhedral to be consistent with the assumption that the regularizer $g$ is polyhedral throughout the paper. But formula \eqref{eq:par-com} holds for other classes of functions. We just need the Fenchel-Rockafellar conjugate formula for the composite function $h(Kx)$ \eqref{eq:FR} to be satisfied, e.g., $h$ is a continuous function; see also \cite[Section~15.3]{BauschkeCombettes17} for other qualification conditions that make this formula valid.  
}    
\end{Remark}

A particular case of composite functions widely used in time-series signal processing and data science for computing successive differences between elements (e.g., finite differences) is the total variation below. 

{\bf (iv) 1D total variation semi-norm.} Let us consider the case $g(x):= \|Dx\|_1$, where $D$ is the $(n-1)\times n$ {\em difference matrix} defined by 
\begin{equation}\label{eq:D}
    D := \begin{pmatrix}
        1 &-1 &0 &\ldots &0\\
        0 &1 &-1 &\ldots &0 \\
        \vdots &\vdots &\ddots &\ddots &\vdots \\
        0 &0 &\ldots &1 &-1
    \end{pmatrix}_{(n-1)\times n}.
\end{equation} 
With $h:=\|\cdot\|_1$ being the $\ell_1$ norm in $\R^{n-1}$, we may write $g$ as the composition $g(x) =h(Dx)$. Note from \eqref{eq:FR} that 
\[
g^*(z)=\inf\{\delta_{\B_\infty}(y)|\; D^*y=z\}=\delta_C(z)
\]
where $C:=\dom g^*=D^*(\B_\infty)$. This implies that $g(x)=\sigma_C(x)$, a particular case of Proposition~\ref{Prop:Supp}. However, the normal cone $N_C(z)$ may not have an explicit form. As $\Im D=\R^{n-1}$, the parallel space of $\partial g^*(z)$, $z\in C$, is obtained from \eqref{eq:par-com} and \eqref{eq:spanN} by
\begin{equation}\label{eq:parcom}
\para\partial g^*(z)=D^{-1}(\para N_{\B_\infty}(y))
\end{equation}
for any $y\in \B_\infty$ satisfying $D^*y=z$. It follows that 
\begin{equation}\label{eq:yz}
 \begin{cases}
        y_1 &= z_1 \\
        -y_1 + y_2&=z_2\\
        \ldots \\
         -y_{n-2} + y_{n-1}&=z_{n-1}\\
          y_{n-1}&=-z_n
    \end{cases}\quad \Longleftrightarrow\quad  \begin{cases}
        y_1 &= z_1 \\
        y_2&=z_1+z_2\\
        \ldots \\
         y_{n-1}&=z_1+\ldots+z_{n-1}\\
          -y_{n-1}&=z_n
    \end{cases}
\end{equation} 
and therefore 
\[
\dom g^* =\left\{z\in \R^n \,\Big|\, z_1 + \ldots + z_n =0\,\,{\rm and }\,\, \Bigg|\sum_{i=1}^kz_i\Bigg|\le 1,\, k=1, \ldots,n-1\right\}.
\]
From \eqref{eq:parcom} and \eqref{eq:spanN}, we have 
\begin{equation}\label{eq:Dhat}
\begin{aligned}  \para\partial g^*(z) &= \left\{x\in \R^n \mid Dx\in \para N_{\mathbb{B}_{\infty}}(y)\right\}\\
&=\big\{x\in \R^n \mid (Dx)_i =0\, \text{ if }\, |y_i|<1,\text{ for } i\in\{1,\ldots,n-1\} \big\}\\
&=\big\{x\in \R^n \mid x_i = x_{i+1}\, \text{ if }\, |y_i|<1,\text{ for } i\in \{1,\ldots,n-1\} \big\}\\
&= \big\{x\in \R^n \mid x_i = x_{i+1}\, \text{ if }\, |z_1+z_2+\ldots+z_{i}|<1, \text{ for } i\in \{1,\ldots,n-1\}\big\},
\end{aligned}
\end{equation}
whenever $z\in \dom g^*$.\\

There are many more polyhedral regularizers mentioned in the Introduction that we are able to compute their effective subspaces. However, we only choose a few special cases here to support our numerical experiments in the next section.

\section{Numerical experiments}\label{sec:numerical}
\setcounter{equation}{0}

In this section, we evaluate the performance of our proposed algorithms against several well-known first-order and second-order methods on both synthetic datasets and a widely used image dataset. The assessments presented here are of two types. The first type concerns performances in individual problems using synthetic datasets, measured in terms of suboptimality in the objective value $\|\varphi(x_k) - \varphi(\ox)\|$, the distance of the iterations to the minimizer $\|x_k - \ox\|$, and the overall runtime until the stopping criterion is satisfied. We note that the optimal value $\varphi(\ox)$, the optimal solution $\ox$, and the Newton direction $d_k$ in the subproblem are all computed using the Gurobi package~\cite{G24}. The second type of evaluation concerns the performance of the proposed algorithm in a real-life image processing application using the SMLM ISBI 2013 dataset~\cite{ISBI2013SMLM}.

For all of our numerical experiments on synthetic data, we choose the initial iterate $x_0 := 0$ and use the following \emph{relative KKT residual} stopping criterion, suggested in~\cite{LST18}, to determine the quality of an approximate optimal solution $x_k$. Specifically, we terminate the algorithms when either
\[
\dfrac{\|x_k - \prox_{\alpha_k g} (x_k - A^T(Ax_k - b))\|}{1+\|x_k\| + \|Ax_k - b \|} \le 10^{-8}
\]
or the maximum number of iterations reaches $10000$. All experiments were implemented in Python (version 3.12.7) and executed on a MacBook Pro equipped with an M2 Pro CPU and 16~GB of RAM.

\subsection{Least square with polyhedral regularizers}\label{Ex1}
 We implement a numerical experiment on synthetic data to solve the following regularized least-square optimization problems:
\begin{equation}\label{p:LS}
\min_{x\in \R^n}\qquad \dfrac{1}{2}\|Ax-b\|^2+\lambda g(x), \quad \lambda > 0   
\end{equation}
where the matrix $A$ has entries generated i.i.d.\ from the standard Gaussian distribution $\mathcal{N}(0,1)$, the observed signal is given by $b = A x^\star + w$ with the true signal $x^\star\in \R^n$ and $w$ denoting a small zero-mean additive Gaussian noise with variance $\sigma^2$, and $g(x)$ is a regularizer that imposes prior information on the solution $x^\star$, specified later in each experiment. Note that the effective subspaces $L=\operatorname{par}\partial g^*(z)$ used in the following experiments are readily available in the previous section. We describe the setting for each experiment in terms of the dimension of the data matrix $A \in \R^{m\times n}$, the true solution $x^\star \in \R^n$, and the tuning hyperparameters as follows:

\begin{itemize}
    \item $\ell_1$ regularizer: $(m,n) = (48,128), \|x^\star\|_{0} = 8$, that is, $x^\star$ has 8 nonzero elements with $\lambda_c = 0.1$, $\lambda = \lambda_{c}\|A^Tb\|_{\infty}$ and noise variance $\sigma^2 = 0.001$.
    \item $\ell_\infty$ regularizer: $(m,n) = (63,64), |I_{x^\star} | = 8$, where $I_{x^\star} = \{i \mid |x_i^\star| = \|x^\star\|_{\infty} \}$ with $\lambda_c = 0.1$, $\lambda = \lambda_{c}\|A^Tb\|_{\infty}$, and noise variance $\sigma^2 = 0.001$.
    \item Total variation (TV) regularizer: $(m,n) = (20,90)$ and we generate a block-constant signal to match the TV regularizer usage for piecewise-constant structure:
\[
x^\star
= \big(\underbrace{0.5,\ldots,0.5}_{30},
\underbrace{-0.3,\ldots,-0.3}_{30},
\underbrace{0.8,\ldots,0.8}_{30}\big)^\top \in \mathbb{R}^{90}.
\]
In this experiment, the tuning hyperparameters are $\lambda_c = 0.3, \lambda = \lambda_c \|A^Tb\|_{\infty}$ and noise variance $\sigma^2 = 0.001$.
    \item OSCAR regularizer: $(m,n) = (300,300)$. We generate a design matrix \(A \in \mathbb{R}^{300 \times 300} \) whose columns $i$ and $j$ have covariance
\[
\Sigma_{ij} := \rho^{|i-j|}, \quad \rho = 0.7.
\]
We then standardize the columns of \( A \) and define the solution coefficient vector as
\[
x^\star = (
\underbrace{0, \dots, 0}_{0.15n}, \;
\underbrace{3.0, \dots, 3.0}_{0.05n}, \;
\underbrace{0, \dots, 0}_{0.25n}, \;
\underbrace{-4.0, \dots, -4.0}_{0.05n}, \;
\underbrace{0, \dots, 0}_{0.20n}, \;
\underbrace{6.0, \dots, 6.0}_{0.05n}, \;
\underbrace{0, \dots, 0}_{0.25n})^\top \in \R^{300}.
\]
These block-wise coefficients emulate correlated features that share identical magnitudes within each group, which is similar to the setting used in \cite{zeng2014ordered}. 
The response vector $b$ is added with a small noise with $\sigma^2 = 0.01$. In this experiment, the tuning hyperparameters are $\lambda_c = 10^{-6}, \lambda_1 = \lambda_c \|A^Tb\|_{\infty}$ and $\lambda_2 = \lambda_1$.
\end{itemize}
We compare the performance of our proposed {\bf Algorithm~\ref{algo:poly-ista}} and {\bf Algorithm~\ref{algo:poly-fista}} with the following first-order and second-order methods:
\begin{enumerate}
\item The Iterative Shrinkage-Thresholding Algorithm (ISTA) in \cite{BeckTeboulle09}.
\item The Fast Iterative Shrinkage-Thresholding Algorithm (FISTA) in \cite{BeckTeboulle09}.
\item The Semismooth Newton Augmented Lagrangian method (SSNAL) in \cite{LST18}.
\item The Coderivative-based Generalized Regularized Newton Method (GRNM) in \cite{KhanhMordukhovichPhat24}.
\item The Globalized SCD Semismooth$^{*}$ Newton method (GSSN) in \cite{GfrererCOA25}.
\end{enumerate}
%Moreover, our implementation also includes a {\em backtracking line-search}  \cite{BeckTeboulle09} scheme for the first two algorithms listed above that can be used in combination with {\bf Algorithm~\ref{algo:poly-ista}} and {\bf Algorithm~\ref{algo:poly-fista}}.

The update iteration $x_{k+1}$ depends on a heuristic condition on the Euclidean distance between $y_k$ and $x_k$, i.e, $\|x_k-y_k\|$. Specifically, when $\|x_k-y_k\|$ is smaller than a predefined tolerance ($10^{-3}$ or $10^{-4}$ in our experiments), then $x_{k+1} = y_k - d_k$ with $d_k\in L_k$ minimizes problem \eqref{p:Sub}, otherwise $x_{k+1} = y_k$ by skipping the Newton step~\eqref{p:Sub}. 
In the plots, we refer to \textbf{Algorithm~\ref{algo:poly-ista}} as \(\mathrm{Newton\_ISTA}\) and \textbf{Algorithm~\ref{algo:poly-fista}} as \(\mathrm{Newton\_FISTA}\), respectively. Their variants with the well-known {\em backtracking line-search} \cite{BeckTeboulle09} are denoted by \(\mathrm{Newton\_BT\_ISTA}\) and \(\mathrm{Newton\_BT\_FISTA}\), respectively.

% \begin{figure}[h]
% \begin{subfigure}{0.5\textwidth}
% \includegraphics[width=1\linewidth, height=6cm]{ls-l1.png} 
% \caption{$\ell_1$-norm}
% \end{subfigure}
% \begin{subfigure}{0.5\textwidth}
% \includegraphics[width=1\linewidth, height=6cm]{ls-linf.png}
% \caption{$\ell_\infty$-norm}
% \end{subfigure}
% \end{figure}

\begin{figure}[H]
\centering
\includegraphics[width=1.1\linewidth,keepaspectratio]{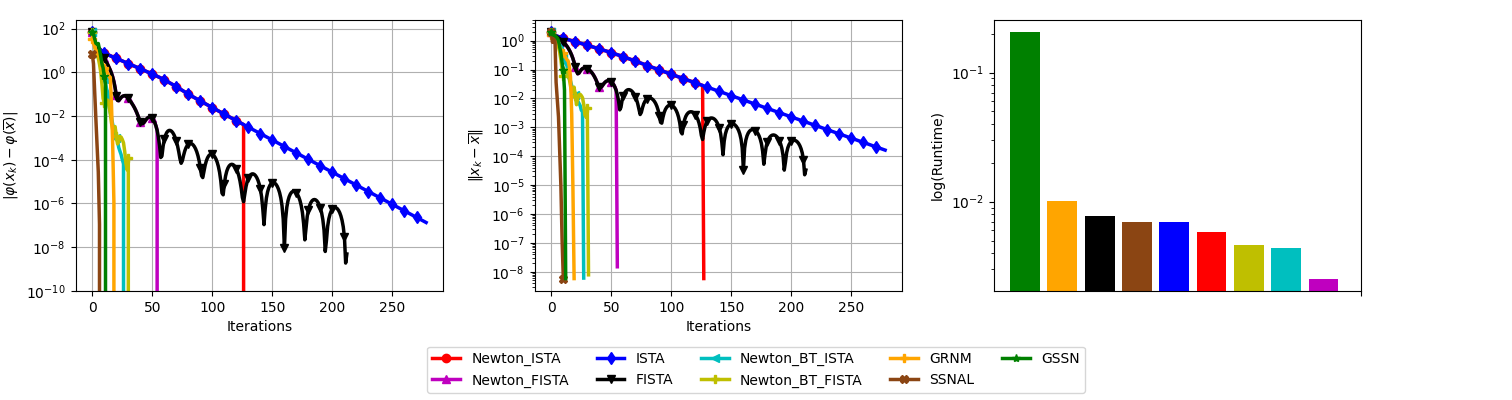} 
\caption{$\ell_1$ norm}
\end{figure}

In the Lasso experiment with $g(x)=\|x\|_1$, Figure 1 demonstrates that our  proposed effective subspace Newton methods, especially {\bf Algorithm~\ref{algo:poly-fista}}, enter a quadratic local region rapidly, attaining the prescribed KKT tolerance in substantially fewer iterations and shorter wall time than ISTA and FISTA. Compared to the coderivative-based Newton method GRNM, our method, despite exhibiting a slower decrease in the objective value per iteration, achieves faster overall runtime once the effective subspace is identified, reflecting its lower per-iteration complexity. Notably, GSSN incurs extra computational overhead from the trial backtracking loops that safeguard the forward–backward envelope decrease, see \cite[Algorithm 1]{GfrererCOA25}, which explains its longer runtime despite significant improvement per iteration. Moreover, a comparison with the semismooth Newton augmented-Lagrangian (SSNAL) solver shows that our framework attains a comparable Newton-type local convergence speed for Lasso without resorting to augmented Lagrangian subproblems, which require evaluating the generalized Jacobian of the nonsmooth proximal mapping. In contrast, our methods rely solely on explicitly computable proximal mappings, together with the parallel-subspace construction in Section~\ref{sec:exam}.
\begin{figure}[H]
\centering
\includegraphics[width=1.1\linewidth,keepaspectratio]{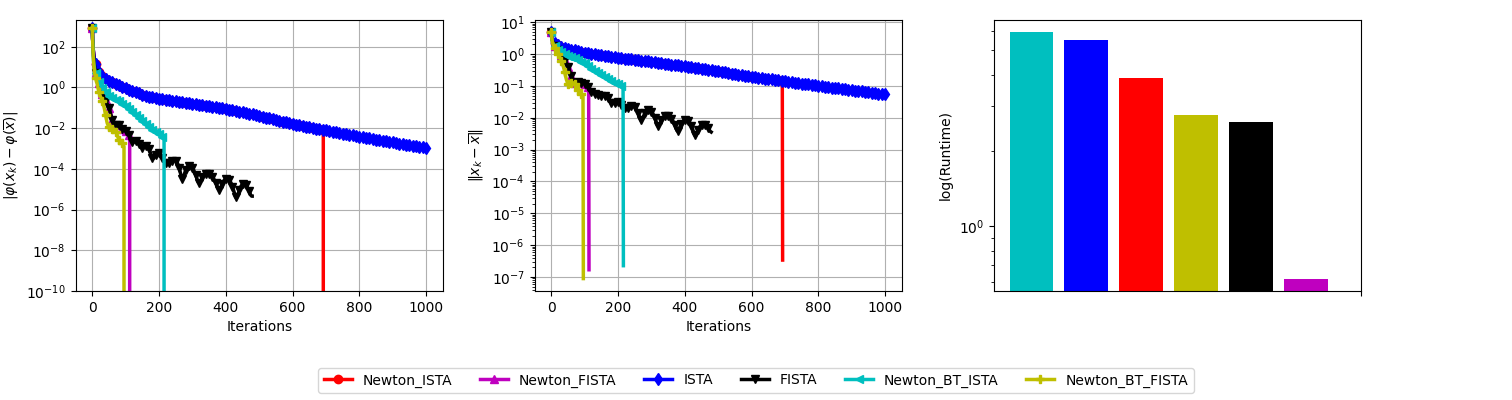} 
\caption{$\ell_\infty$ norm}
\end{figure}
Next, we compare the performance of our algorithm with ISTA, FISTA, and SSNAL \cite{LSTX19} for solving the OSCAR model. We are not able to implement the GRNM and GSSN for this model. 
\begin{figure}[H]
\centering
\includegraphics[width=1.1\linewidth,keepaspectratio]{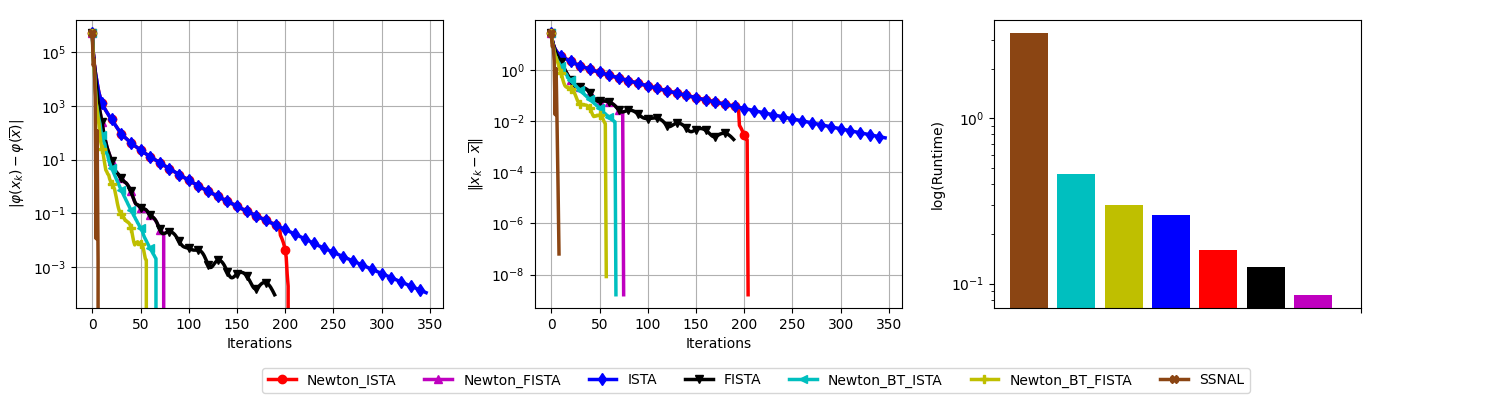} 
\caption{OSCAR regularizer}
\end{figure}

Across the $\ell_\infty$, 1D total variation, and OSCAR experiments, the observed behavior is consistent with the Lasso benchmark. Our method identifies the effective structure at an early stage and subsequently exhibits the quadratic convergence profile, supporting our convergence analysis in Section~\ref{sec:polyhedral}. In all three settings, it attains the set KKT tolerance using significantly fewer iterations and reduced CPU runtime relative to standard first-order baselines, thereby outputs a high-accuracy approximate solution and avoids the extended plateaus or oscillation of first-order algorithms.

A fundamental difference between our proposed \textbf{Algorithm~\ref{algo:poly-ista}} and \textbf{Algorithm~\ref{algo:poly-fista}} and other second-order schemes is that our methods are local in nature: the quadratic convergence rate is guaranteed only when the sequence of iterates is sufficiently close to an optimal solution. Consequently, in our framework it is crucial to rely on the global convergence properties of the underlying first-order schemes. A purely heuristic switching rule that triggers a Newton step too early may cause the method to diverge. The advantage of this design is that, away from the solution, the per-iteration complexity essentially coincides with that of a first-order method, which explains the reduced computational time observed in our experiments. The trade-off is that the switching tolerance for activating the Newton step must be tuned appropriately to balance convergence speed and robustness. In contrast, GRNM, GSSN, and SSNAL are globally convergent second-order methods and therefore require more elaborate algorithmic designs and incur higher computational complexity. However, since they do not rely on any switching condition to activate the Newton step, they can be more robust in several situations.

\begin{figure}[H]
\centering
\includegraphics[width=1\linewidth,keepaspectratio]{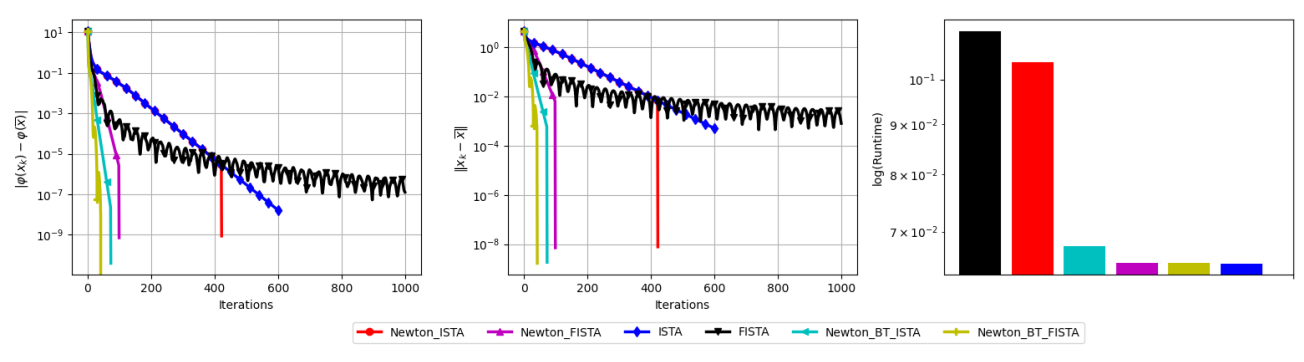} 
\caption{1D total variation semi-norm}
\end{figure}

% \subsection{Logistic regression with polyhedral regularizers}
% In this section, we test the performance of ISTA-Newton and FISTA-Newton on $\ell_1$ and $\ell_\infty$-regularized logistic regression problem for binary classification on several public dataset from the LIBSVM library. Recall the logistic regression problem
% \begin{equation*}
% \min_{(w,\beta)\in \mathbb{R}^n \times \R } \quad \displaystyle \frac{1}{m}\sum\limits_{i = 1}^{m} \ln( 1 +  \exp(-y_i(\langle w,x_i\rangle + \beta)) + \lambda g(x), \quad \lambda > 0.
% \end{equation*}
% The detailed information on each dataset is presented in the following table

% \begin{table}[h!]
% \centering
% \caption{High-dimensional Binary Classification Datasets from LIBSVM}
% \begin{tabular}{lcc}
% \toprule
% \textbf{Dataset} \& \textbf{Samples (m)} & \textbf{Features (n)} \\
% \midrule
% Colon Cancer     & 62                  & 2000                 \\
% rcv1         & 20,242                  & 47,236                \\
% w8a          & 49,749                & 300               \\
% \bottomrule
% \end{tabular}
% \end{table}

\subsection{Sparse nonnegative image super-resolution with Poisson data}

In this experiment, we address the problem of reconstructing a high-resolution nonnegative image from low-resolution Poisson-corrupted measurements, a scenario commonly encountered in fluorescence microscopy and photon-limited imaging \cite{Lazzaretti2021, RSCL22}. 
The dataset for this task is the SMLM ISBI 2013 dataset, which is composed of 361 images representing 8 tubes of $30$nm diameter. The size of each image is $64 \times 64$ where each pixel is of size $100 \times 100$nm . We localize the molecules on a $256 \times 256$ pixel image corresponding to a factor $L = 4$, where the size of each pixel is thus $25 \times 25$nm. The Gaussian Point Spread Function has full width at half maximum (FWHM) equals $258.2$nm. In Figure 5, we demonstrate several frames from the data set. 
\begin{figure}[h]
\centering
\includegraphics[width=0.85\linewidth]{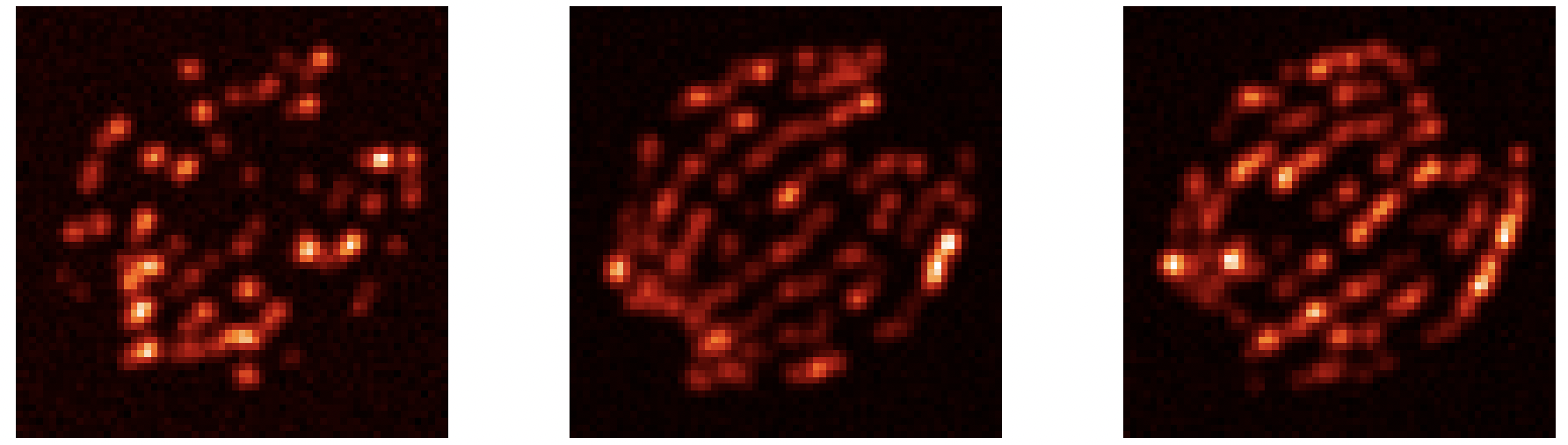}
  \caption{Images in SMLM ISBI 2013 dataset}
\end{figure}

We now describe the optimization model for the reconstruction task. Given a low-resolution and noisy observation \(y \in \mathbb{R}^m_{+}\), we seek a sparse, nonnegative high-resolution image \(x \in \mathbb{R}^n_{+}\) satisfying the forward process
\[
y \;=\; \mathcal{P}(MHx + b),
\] where $H \in \mathbb{R}^{n \times n}$ is a known blurring/convolution operator corresponding to the point spread function (PSF), $M \in \mathbb{R}^{m \times n}$ is a downsampling operator with factor $q>1$, \(\mathcal{P}(w)\) denotes a realization of an \(m\)-dimensional Poisson random vector with mean parameter \(w \in \mathbb{R}^m_{+}\). The vector $b \in \mathbb{R}^m_{>0}$ represents a constant background.

Following the standard Bayesian Maximum A Posteriori (MAP), one finds that the data fidelity term corresponding to the negative log-likelihood of
 the Poisson p.d.f. is the Kullback-Leibler (KL) divergence. Hence, the sparse nonnegative image super-resolution task can be formulated as minimizing a constrained composite optimization problem that combines a Poisson data-fidelity term, expressed via the {\em Kullback--Leibler} (KL) {\em divergence}, with the $\ell_1$ sparsity-promoting regularizer:
\begin{equation}
    \min_{x \ge 0} \;
    D_{\mathrm{KL}}(y \,\|\, M H x + b)
    + \lambda \|x\|_1,
    \label{eq:poisson_sr_problem}
\end{equation}
where $y \in \mathbb{R}^m_{+}$ denotes the observed photon counts and $\lambda > 0$ is a regularization parameter controlling the trade-off between data fidelity and sparsity. The KL divergence term is defined as
\begin{equation*}
    f(x):=D_{\mathrm{KL}}(y \,\|\, M H x + b)
    = \sum_{i=1}^{m} \left[
        y_i \log \!\left( \frac{y_i}{(M H x)_i + b_i} \right)
        - y_i + (M H x)_i + b_i
    \right].
\end{equation*}
Although this function does not have full domain as required in our Theorem~\ref{theo:quad-poly}, its domain is an open convex set. Adapting our proof to this case is straightforward but technically involved, so we omit the details.  
% % Hessian–vector product (matrix-free form)
% \begin{equation}
% \big(\nabla^{2} \Phi(x)\big)\, v
% \;=\;
% H^{\top} M^{\top} \!\Bigg(
% \frac{y}{\,z(x)^{\circ 2}} \;\odot\; \big( M H v \big)
% \Bigg),
% \end{equation}

Note that problem \eqref{eq:poisson_sr_problem} can be reformulated in the form \eqref{p:CPintro} as an unconstrained optimization problem with the polyhedral regularizer  $g(x) := \lambda \|x\|_1 + \delta_{\R^n_{+}}(x)$.
Next, we briefly discuss the necessary computation steps for solving \eqref{eq:poisson_sr_problem} using our proposed Newton method with effective subspaces. Firstly, the Hessian of the KL divergence admits the formulation
\begin{equation*}
\nabla^2 f(x)=
H^{\top} M^{\top} \,
\Diag\!\left( \frac{y}{\,( M H x + b)^{2}} \right)
\, M H ,
\end{equation*}
where the division and the ``square'' being taken element-wise. It is also locally Lipschitz continuous around any point in its domain.
In adddition, the proximal operator of $g$ can be computed explicitly as the projection of the soft-thresholded point onto $\mathbb{R}^n_{+}$:
\begin{equation*}
    {\rm \textbf{prox}}_{\al g}(v)
    = P_{\mathbb{R}^n_{+}}\!\left(
        {\rm \textbf{prox}}_{\al \lambda \|\cdot\|_1}(v)
      \right)
    = \max\,\!\left( 0,\, \operatorname{sign}\,(v) \odot (|v| - \al \lambda) \right),
    \label{eq:prox_nonneg_l1}
\end{equation*}
where $\odot$ denotes the componentwise multiplication. The Legendre-Fenchel conjugate of $g(x)$ is
\[
g^{*}(z)
= \sup_{x \ge 0}\, \{\langle z, x\rangle - \lambda\|x\|_{1}\}
= \sum_{i=1}^{n} \sup_{x_{i} \ge 0}\, (z_{i} - \lambda)x_{i} = \delta_{(-\infty,\lambda]^{n}}(z).
\]
For any \( z \in (-\infty,\lambda]^{n} \), denote by \( I(z) := \{\,i \mid z_{i}=\lambda\,\} \) the active index set. The subdifferential \( \partial g^{*}(z) \)
coincides with the normal cone to the box \( C = (-\infty,\lambda]^{n} \), which has the representation
\[
N_{C}(z)
    = \{\, v \in \mathbb{R}^{n} \mid 
       v_{i} \ge 0\ \text{if}\ z_{i}=\lambda,\
       v_{i}=0\ \text{if}\ z_{i}<\lambda \,\} = \{\, v \in \mathbb{R}^{n} \mid v_{I(z)} \ge 0,\ v_{I(z)^{c}} = 0 \,\}.
\]
The corresponding effective subspace, which defines the constraint in
the Newton step, is obtained easily as
\[
\operatorname{par}\partial g^{*}(z)
    = N_{C}(z) - N_{C}(z)
    = \mathbb{R}^{I(z)} \times \{0\}^{I(z)^{c}}.
\]
In this experiment, the starting iteration is $x_0 = H^{\top} M^{\top} y$ and the tuning hyperparameter is $\lambda = 0.5\|\max\{\nabla f(0),0\}\|_{\infty}$. Since the gradient $\nabla f$ is not globally Lipschitz continuous, to the best of our knowledge there is no solid theoretical guarantee for using ISTA or FISTA with a constant stepsize $\alpha>0$, especially concerning the convergence rate. However, $\nabla f$ is still locally Lipschitz continuous, and thus, when equipped with a backtracking line-search, both ISTA and FISTA schemes converge \cite{BelloCruzNghia16}. The reconstruction results are illustrated in Figure 6, in which the proposed {\bf Algorithm~\ref{algo:poly-fista}} with backtracking line-search successfully recovers fine structural details of the image.

\begin{figure}[h]
  \centering
  \begin{subfigure}{0.333\textwidth}\centering
    \includegraphics[width=0.7\linewidth]{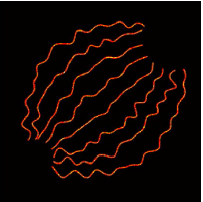}
  \end{subfigure}%
  \begin{subfigure}{0.333\textwidth}\centering
    \includegraphics[width=0.7\linewidth]{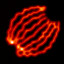}
  \end{subfigure}%   <-- no space
  \begin{subfigure}{0.333\textwidth}\centering
    \includegraphics[width=0.7\linewidth]{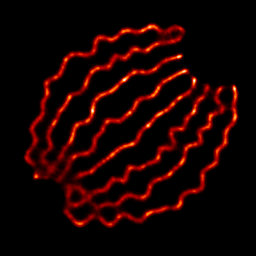}
  \end{subfigure}%   <-- no space
  
  \caption{From left to right: High-resolution ground truth, Mean of original frames, Mean of reconstructed frames using \textbf{Algorithm~\ref{algo:poly-fista}} with backtracking line-search.}
\end{figure}

% \begin{tabular}[H]{ |p{3cm}||p{3cm}|p{3cm}|p{3cm}|}
%  \hline
%  \multicolumn{4}{|c|}{Country List} \\
%  \hline
%  Country Name or Area Name& ISO ALPHA 2 Code &ISO ALPHA 3 Code&ISO numeric Code\\
%  \hline
%  Afghanistan   & AF    &AFG&   004\\
%  Aland Islands&   AX  & ALA   &248\\
%  Albania &AL & ALB&  008\\
%  Algeria    &DZ & DZA&  012\\
%  American Samoa&   AS  & ASM&016\\
%  Andorra& AD  & AND   &020\\
%  Angola& AO  & AGO&024\\
%  \hline
% \end{tabular}

\section{Conclusion}\label{sec:conclude}
\setcounter{equation}{0}

In this paper, we introduce a Newton-type method for nonsmooth composite optimization that leverages tilt-stability and effective subspace identification to achieve local quadratic convergence. Our approach departs from existing manifold Newton methods and can be viewed as a particular case of coderivative-based and SCD Newton methods, while avoiding the computation of sophisticated second-order structures. In particular, we provide constructive characterizations of the effective subspaces for a wide family of polyhedral regularizers, including the $\ell_1$ norm, the $\ell_\infty$ norm, the sorted $\ell_1$ norm, and the 1D total variation semi-norm. Numerical experiments confirm the practical efficiency of the proposed method, demonstrating a significant acceleration over first-order schemes and a competitive performance relative to other Newton-type approaches.

Several directions remain open for future research. Extending our approach to non-polyhedral structured regularizers such as the  nuclear norm, group-sparsity with overlapping blocks, and the isotropic total variation would broaden the scope of applicability. Although the technique in our main Theorem~\ref{theo:quad-poly} heavily relies on properties of polyhedral mapping, we believe that the strategy used in this paper can handle the  non-polyhedral cases with some modifications. Moreover, our approach can be extended to solving the generalized equation in the format
\begin{equation}\label{eq:GE}
    0\in F(x) + \partial g(x),
\end{equation}
in which $F:\XX\to \XX$ is a continuously differentiable (not necessarily monotone) mapping and $g$ is a polyhedral mapping. In this case, the tilt-stability assumption at the minimizer $\ox$ of \eqref{p:CP} can be relaxed and replaced with the  {\em metric regularity} of the mapping $(F+\partial g)$ at a solution $\ox$ of \eqref{eq:GE} for $0$. This is a research in progress with potential applications to different and broader classes of optimization problems. 
\section*{Acknowledgments}
We are deeply grateful to C. Wylie for kindly sharing his code implementation of the Newton method with active manifolds combined with FISTA, as developed in \cite{wylie2019}, during the development of this paper.
\small

\bibliographystyle{abbrv}
\bibliography{bio}
\end{document}